\def\ul{\underline}
\def\ds{\displaystyle}
\newcommand{\R}{\ensuremath{\mathbb{R}}\xspace}
\newcommand{\Z}{\ensuremath{\mathbb{Z}}\xspace}
\newcommand{\Sp}{\ensuremath{\mathbb{S}}\xspace}
\newcommand{\supp}{\ensuremath{\mbox{supp}\,}\xspace}
\newtheorem{theorem}{Theorem}[section] 
\newtheorem{lemma}[theorem]{Lemma} 
\newtheorem{proposition}[theorem]{Proposition}
\theoremstyle{definition} 
\theoremstyle{remark}
\newtheorem{remark}[theorem]{Remark}
\newtheorem*{remark*}{Remark}
\begin{document}
\title{Stability of Blow Up for a 1D model of Axisymmetric 3D Euler Equation}
\author{Tam Do}
\address{\hskip-\parindent
Tam Do\\
Department of Mathematics\\
Rice University\\
Houston, TX 77005, USA}
\email{Tam.Do@rice.edu}
\author{Alexander Kiselev}
\address{\hskip-\parindent
Alexander Kiselev\\
Department of Mathematics\\
Rice University\\
Houston, TX 77005, USA}
\email{kiselev@rice.edu}
\author{Xiaoqian Xu}
\address{\hskip-\parindent
Xiaoqian Xu\\
Department of Mathematics\\
University of Wisconsin-Madison\\
Madison, WI 53706, USA
}
\email{xxu@math.wisc.edu}

\date{\today}

\begin{abstract}
The question of the global regularity vs finite time blow up in solutions of the 3D incompressible Euler equation is a major open problem of modern applied analysis.
In this paper, we study a class of one-dimensional models of the axisymmetric hyperbolic boundary blow up scenario for the 3D Euler equation proposed by Hou and Luo \cite{HouLuo1} based on extensive
numerical simulations. These models generalize the 1D Hou-Luo model suggested in \cite{HouLuo1}, for which finite time blow up has been established in \cite{sixAuthors}.
The main new aspects of this work are twofold. First, we establish finite time blow up for a model that is a closer approximation of the three dimensional case than the original Hou-Luo model,
in the sense that it contains relevant lower order terms in the Biot-Savart law that have been discarded in \cite{HouLuo1}, \cite{sixAuthors}. Secondly, we show that the blow up mechanism is quite robust,
by considering a broader family of models with the same main term as in the Hou-Luo model. Such blow up stability result may be useful in further work on understanding the 3D hyperbolic blow up scenario.

\end{abstract}

\maketitle

\section{Introduction}
The main purpose of this paper is to contribute to the analysis of a recently discovered scenario for singularity formation in solutions of 3D Euler equation.
The 3D axisymmetric Euler equation with swirl is given by
\begin{align}\label{e1}
\partial_t\left(\frac{\omega^{\theta}}{r}\right)+u^r\left(\frac{\omega^{\theta}}{r}\right)_r+u^z\left(\frac{\omega^{\theta}}{r}\right)_z=\left(\frac{(ru^{\theta})^2}{r^4}\right)_z,
\end{align}
\begin{align}\label{e2}
\partial_t (ru^{\theta})+u^r(ru^{\theta})_r+u^z(ru^{\theta})_z=0,
\end{align}
where $u^r$ and $u^z$ can be calculated via
\begin{align}\label{BS}
u^r=\frac{\psi_z}{r}, \,\,\,
u^z=-\frac{\psi_r}{r},
\end{align}
and the stream function $\psi$ satisfies the elliptic equation
\begin{align}\label{BS1}
\frac{1}{r}\frac{\partial}{\partial r}\left(\frac{1}{r}\frac{\partial \psi}{\partial r}\right)+\frac{1}{r^2}\frac{\partial^2\psi}{\partial z^2}=\omega.
\end{align}
One can write $u^r$ and $u^z$ in terms of $\omega$ by computing the Green's function of the above elliptic PDE; more details can be found on \cite{majda2002vorticity}.

The numerical simulations performed in \cite{HouLuo1} consider fluid contained in an infinite cylinder with periodic boundary conditions in $z$ and no flux condition at the boundary of the
cylinder. The initial data is given by non-zero swirl $u^\theta$, which is odd in $z,$ while the angular vorticity is originally zero. For a particular example of such initial data, very fast
growth of angular vorticity is observed at a ring of hyperbolic points defined by the boundary of the cylinder and $z=0.$ As the first step towards rigorous analysis of this scenario, a 1D
model inspired by the numerics has been proposed in \cite{HouLuo1,HouLuo2}. We will refer to this 1D model as Hou-Luo (HL) model. The HL model is given by
\begin{align}
\label{model1}
\omega_t+u\omega_x &= \theta_x \\
\label{model2}
\theta_t+u \theta_x &=0 \\
\label{m3}
u_x =H\omega,
\end{align}
where $H$ is the Hilbert transform and the space domain is taken to be periodic, $\Sp^1$ (the $\R^1$ setting can also be considered).
One should think of the $x$ coordinate as corresponding to the $z$ direction in the original equation.
Equivalently, if $\omega$ is mean zero over the period, we can write the Biot-Savart law for $u$ as
\begin{align}
\label{HL}
u(x,t) = k\ast \omega(x,t) \quad\mbox{where}\quad k(x)= \frac{1}{\pi} \log|x|.
\end{align}
In the periodic case, $\omega$ in the formula above is extended to all real line where the convolution is applied. The convergence of the integral is understood
in the appropriate principal value sense.
In \cite{sixAuthors}, finite time blow up is shown for \eqref{model1}-(\ref{m3}) for a large class of smooth initial data.

There has been other work motivated by Hou-Luo computations and relevant to understanding the hyperbolic boundary blow up scenario. Kiselev and Sverak \cite{KS} show very fast (in fact, optimal)
growth of $\nabla \omega$ in solutions of 2D Euler equation in a geometry related to the Hou-Luo scenario. Choi, Kiselev and Yao \cite{CKY} analyzed a 1D model related to the HL model, but with a simplified
Biot-Savart law inspired by \cite{KS}. They established finite time blow up for a broad class of initial data. Hou and Liu \cite{HouLiu} have described the blow up solutions in the CKY model in more detail,
and showed that these solutions possess self-similar structure.

We note that the tradition of 1D models in fluid mechanics goes back many years. One of the earliest of these models was proposed by Constantin, Lax and Majda \cite{CLM}, and later inspired other models
\cite{deGregorio}, \cite{CCF}, see also \cite{DHRX}, \cite{HR} for recent related work. 
It is fascinating that many natural questions about solutions to these models remain unanswered. We refer the reader to \cite{sixAuthors} for a survey of this subject.

\bigskip
In this paper, our first theorem is the generalization of the results of \cite{sixAuthors} to the model with the following adjusted choice of Biot-Savart law:
\begin{align}
\label{kern}
u(x,t) = k\ast \omega(x,t) \quad\mbox{where}\quad k(x)=\frac{1}{\pi}\log\frac{|x|}{\sqrt{x^2+a^2}}.
\end{align}
It has been observed already in \cite{HouLuo1,sixAuthors} that the kernel \eqref{kern} appears naturally in the reduction of the 3D Euler equation to the 1D model of hyperbolic blow up scenario.
Nevertheless, the simpler kernel \eqref{HL} has been considered as the first step.
The difference between \eqref{kern} and the original choice \eqref{HL} is smooth, so one can expect that the properties of the equations should be similar. However, the actual proof of finite time blow up
in \cite{sixAuthors} relies on fairly fine properties of the Biot-Savart kernel, so the extension to \eqref{kern} is far from immediate.
In Section~\ref{percase}, we prove finite time blow up of solutions to the system (\ref{model1}) and (\ref{model2}) with law $\eqref{kern}$.
While we will be able to follow the framework of the blow up proof developed in \cite{sixAuthors}, many new estimates will be needed.
Similarly to \cite{sixAuthors}, the proof shows finite time blow up for a rather wide class of the initial data.

\medskip
For our second main result, we prove that the solutions to $\eqref{model1}$, $\eqref{model2}$ with even more general kernels in the Biot-Savart law exhibit finite time blow up as well. We will modify \eqref{HL} by adding a smooth function which preserves the symmetries of $\eqref{model1}$, $\eqref{model2}$ (and of the initial data). The details will appear in Section~\ref{pertblowup}. To prove blow up, roughly speaking, we isolate the ``leading term" of dynamics that
leads to blow up and persists even with a more general Biot-Savart law. The proof is quite different from the first result: the proof of finite time blow up for the Biot-Savart law \eqref{kern} relies, in the
spirit of \cite{sixAuthors}, on algebraic estimates which show that certain key quantities are positive definite. On the other hand, the more general blow up stability result is proved in a perturbative fashion, utilizing a global
bound on the $L^1$ norm of vorticity. It may appear that our second result includes the first one, but it is not literally true as in the second case we have to work
with a much more restrictive class of initial data.

\medskip
One can think of our results as strengthening the case for studying the hyperbolic blow up scenario for the 3D Euler equation. By proving singularity formation for more general Biot-Savart laws, one can view the blow up of (\ref{model1})-(\ref{m3}) as a robust phenomenon not dependent on the fine structure of the model. This may help to build a base for the next step - rigorous analysis of the higher dimensional problems.

\section{Derivation of the Model Equations}\label{deriv2}

To obtain a simplified model of \eqref{e1},\eqref{e2} the first step is to consider reduction to the 2D inviscid Boussinesq equations. This system on a half plane $\R \times [0,\infty)$ is given by
\begin{align}\label{2dbous}
\omega_t+ u^x \omega_x +u^y\omega_y &=\theta_x \\ \nonumber
\theta_t+ u^x \theta_x + u^y\theta_y &= 0
\end{align}
where $u=(u^x,u^y)$ and is derived from $\omega$ by the usual 2D Euler Biot-Savart law $u=\nabla^\perp (-\Delta_D)^{-1}\omega,$ with $\nabla^\perp =(\partial_2, -\partial_1)$ and $\Delta_D$ Dirichlet Lapalcian.
The system is classical and describes motion of 2D ideal buoyant fluid in the field of gravity. The global regularity of solutions to 2D inviscid Boussinesq system is also open. This problem is featured in the Yudovich's list of ``eleven
great problems of mathematical hydrodynamics" \cite{Yud}.

The fact that 2D inviscid Boussinesq equation is a close proxy for 3D axisymmetric Euler equation, at least away from the axis $r=0,$ is well known (see e.g. \cite{majda2002vorticity}).
Indeed, if in \eqref{e1}, \eqref{e2}, \eqref{BS}, \eqref{BS1} we re-label $\omega^\theta/r \equiv \omega,$ $r u^\theta \equiv \theta,$ $r=y,$ $z=x,$ and set $r=1$ in the coefficients, we obtain \eqref{2dbous}.
Since in the Hou-Luo scenario, the fastest growth of vorticity is observed at the boundary of the cylinder $r=1,$ and in particular away from the axis, the analogy should apply.
In \cite{sixAuthors}, to derive the HL model, the authors consider the system \eqref{2dbous} in the half-plane and restrict the system to the boundary $\{(x,y): y=0\}$ so we have $u^y=0$.
To derive a closed form Biot-Savart law for the 1D system, $\omega$ is assumed to be constant in $y$ in a boundary layer close to the boundary of width $a>0$, and zero elsewhere.
 Such assumption leads to a law defined by convolution with the following kernel:

\begin{equation*}
k(x_1) = \int_0^a \left.\frac{\partial}{\partial x_2} \right|_{x_2=0} G_D((x_1,x_2), (0,y_2))\, dy_2
\end{equation*}
where $G_D$ is the Green's function of Laplacian in the upper half-plane with Dirichlet boundary conditions. We know that
$$
G_D(z,w)=\frac{1}{2\pi}\log|z-w|-\frac{1}{2\pi}\log|z-w^*|,\quad w^*=(w_1,-w_2),
$$
and by a simple calculation one gets

\begin{equation}
u(x)=\tilde{k}\ast \omega (x),
\end{equation}
where
\begin{equation}\label{kernel}
\tilde{k}(x)=\frac{1}{\pi}\log\frac{|x|}{\sqrt{x^2+a^2}}.
\end{equation}
In \cite{sixAuthors}, the authors discard the smooth part of $\tilde{k}$ (namely, $\frac{1}{\pi}\log(\sqrt{x^2+a^2})$). In this paper we will consider $\tilde{k}$ directly or even more general perturbed kernels.

While the boundary layer assumption is strong and clearly does not hold for the higher dimensional case precisely, it is noted in \cite{HouLuo1} that the numerical simulations of the full 3D Euler equation and of the
reduced 1D model exhibit striking similarity.
Based on the numerical results about potential singularity profile for 3D axisymmetric Euler equation (\cite{HouLuo1}), we are particularly interested in the case when $\omega$ is periodic in $x$ (formerly $z$) variable
and will treat this case in the next section. The periodic assumption is not crucial; in the appendix we will outline the arguments which adjust the proof to the real line case.

We complete this section by stating a local well-posedness and a conditional regularity result that we will need later.
\medskip
The system $\eqref{model1}$, $\eqref{model2}$, $\eqref{kern}$ is locally well posed and possesses a Beale-Kato-Majda
type criterion. We formalize this below.

\medskip
\begin{proposition}
\label{lwp}
 (Local Existence and Blow Up Criteria)
Suppose $(\omega_0, \theta_0)\in H^m(\Sp^1)\times H^{m+1}(\Sp^1)$ 
where $m\ge 2$. Then there exists $T=T(\omega_0,\theta_0)>0$ such that there exists a unique classical solution $(\omega,\theta)$ of $\eqref{model1}$, $\eqref{model2}$, $\eqref{kern}$ and $(\omega,\theta)\in C([0,T]; H^m\times H^{m+1})$. In particular, if $T^*$ is the maximal time of existence of such solution then
\begin{align}
\label{BKM}
\lim_{t\nearrow T^*} \int_0^t \|u_x(\cdot, \tau)\|_{L^\infty}\, d\tau=\infty.
\end{align}
\end{proposition}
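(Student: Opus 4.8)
The plan is to treat Proposition~\ref{lwp} as a standard quasilinear well-posedness statement, the only non-routine ingredients being the nonlocal Biot--Savart law \eqref{kern} and the precise $\|u_x\|_{L^\infty}$ form of the blow-up criterion \eqref{BKM}. The first thing I would record is the mapping properties of \eqref{kern}. Differentiating the kernel gives $u_x = H\omega + R\omega$, where $H$ is the Hilbert transform (from the $\tfrac1\pi\log|x|$ part) and $R$ is convolution with the smooth, bounded, integrable kernel $-\tfrac{x}{\pi(x^2+a^2)}$. Since $H$ is bounded on every $H^s$ and $R$ is smoothing, this yields $\|u\|_{H^{s+1}}\lesssim\|\omega\|_{H^s}$ for mean-zero $\omega$; note the convolution requires, and the flow preserves, $\int_{\Sp^1}\omega=0$, because both $H$ and $R$ are antisymmetric. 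With this in hand local existence follows the usual route: regularize (mollify the velocity, or add $\epsilon\partial_x^2$) so that the truncated system is an ODE in $H^m\times H^{m+1}$; derive energy estimates uniform in the regularization using the Kato--Ponce commutator and product inequalities together with $\|u\|_{H^{m+1}}\lesssim\|\omega\|_{H^m}$ and the embedding $H^m\hookrightarrow C^1$ (valid for $m\ge2$); this produces $\tfrac{d}{dt}\big(\|\omega\|_{H^m}^2+\|\theta\|_{H^{m+1}}^2\big)\le C\,P\big(\|\omega\|_{H^m}^2+\|\theta\|_{H^{m+1}}^2\big)$ for a polynomial $P$, hence a uniform existence time $T>0$. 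Passing to the limit by compactness, proving uniqueness through an $L^2\times H^1$ estimate on the difference of two solutions, and upgrading weak to strong continuity in time by a Bona--Smith type argument completes this part.

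The blow-up criterion is the point that requires care, and the key observations come from the transport structure. Along the flow $\dot X=u(X,t)$ one has $D_t\theta=0$, $D_t\theta_x=-u_x\theta_x$, and $D_t\omega=\theta_x$, whence
\[
\|\theta_x(t)\|_{L^\infty}\le\|\partial_x\theta_0\|_{L^\infty}\exp\!\Big(\int_0^t\|u_x\|_{L^\infty}\,d\tau\Big),\qquad \|\omega(t)\|_{L^\infty}\le\|\omega_0\|_{L^\infty}+\int_0^t\|\theta_x\|_{L^\infty}\,d\tau.
\]
Thus both $\|\omega\|_{L^\infty}$ and $\|\theta_x\|_{L^\infty}$ stay bounded on any interval on which $\int_0^t\|u_x\|_{L^\infty}\,d\tau$ is finite. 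It then remains to establish $\tfrac{d}{dt}\big(\|\omega\|_{H^m}^2+\|\theta\|_{H^{m+1}}^2\big)\lesssim\big(1+\|u_x\|_{L^\infty}+\|\omega\|_{L^\infty}+\|\theta_x\|_{L^\infty}\big)\big(\|\omega\|_{H^m}^2+\|\theta\|_{H^{m+1}}^2\big)$, after which Gr\"onwall's inequality shows the $H^m\times H^{m+1}$ norm cannot blow up while $\int_0^t\|u_x\|_{L^\infty}\,d\tau$ stays finite; if $T^*<\infty$ were a maximal time with a finite integral, the solution would extend past $T^*$ by the local theory, contradicting maximality, which is exactly \eqref{BKM}.

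The main obstacle is extracting precisely the $\|u_x\|_{L^\infty}$ dependence in the top-order energy estimate. The naive Kato--Ponce commutator applied to the transport term $u\omega_x$ produces the term $\|u\|_{H^m}\|\omega_x\|_{L^\infty}$, and $\|\omega_x\|_{L^\infty}$ is \emph{not} controlled by $\|u_x\|_{L^\infty}$ since the Hilbert transform is unbounded on $L^\infty$. The fix I would use is to rewrite $u\omega_x=\partial_x(u\omega)-u_x\omega$ before differentiating, which gives the identity $\partial_x^m(u\omega_x)-u\,\partial_x^{m+1}\omega=[\partial_x^{m+1},u]\omega-\partial_x^m(u_x\omega)$; using $\|u\|_{H^{m+1}}\lesssim\|\omega\|_{H^m}$, both terms on the right are bounded in $L^2$ by $(\|u_x\|_{L^\infty}+\|\omega\|_{L^\infty})\|\omega\|_{H^m}$, so only the harmless zeroth-order factor $\|\omega\|_{L^\infty}$ survives. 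For the $\theta$-equation this same rewriting would instead demand $\|u\|_{H^{m+2}}\sim\|\omega\|_{H^{m+1}}$, one derivative beyond the available regularity, so there I would keep the standard commutator, whose surviving factor $\|u\|_{H^{m+1}}\|\theta_x\|_{L^\infty}\lesssim\|\omega\|_{H^m}\|\theta_x\|_{L^\infty}$ is again controlled, this time by $\|\theta_x\|_{L^\infty}$.

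The asymmetry between the two equations, $\omega\in H^m$ but $\theta\in H^{m+1}$ and $u\in H^{m+1}$, is exactly what forces these two different commutator manipulations, and reconciling them with the single clean quantity $\|u_x\|_{L^\infty}$ by means of the transport bounds on $\|\omega\|_{L^\infty}$ and $\|\theta_x\|_{L^\infty}$ is the crux of the argument. Everything else, namely the lower-order contributions $0\le k<m$ in the $H^m$ norm, the $\theta_x$ source term in the $\omega$-equation (estimated simply by $\|\theta\|_{H^{m+1}}\|\omega\|_{H^m}$), and the passage from the a priori inequality to genuine regularity of the limit, is routine once these two top-order estimates are in place.
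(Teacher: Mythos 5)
The paper does not actually prove Proposition~\ref{lwp}; it defers to \cite{sixAuthors} and \cite{CKY}, so there is no in-text argument to compare against. Your proposal is the standard proof that those references carry out, and it is correct in all essentials: the decomposition $u_x=H\omega+R\omega$ with $R$ smoothing, the regularize--estimate--pass-to-the-limit scheme for local existence, and, crucially, the rewriting $u\omega_x=\partial_x(u\omega)-u_x\omega$ in the top-order $\omega$-estimate so that Kato--Ponce produces $\|\omega\|_{L^\infty}$ rather than the uncontrollable $\|\omega_x\|_{L^\infty}$, combined with the Lagrangian bounds $\|\theta_x\|_{L^\infty}\le\|\theta_{0x}\|_{L^\infty}\exp\bigl(\int_0^t\|u_x\|_{L^\infty}\bigr)$ and $\|\omega\|_{L^\infty}\le\|\omega_0\|_{L^\infty}+\int_0^t\|\theta_x\|_{L^\infty}$. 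That is exactly the mechanism that closes the Gr\"onwall argument with only $\int_0^t\|u_x\|_{L^\infty}\,d\tau$ on the right, yielding \eqref{BKM}. Your identification of the asymmetry between the $\omega$- and $\theta$-estimates (one needs the product rewriting, the other tolerates the plain commutator because only $\|u\|_{H^{m+1}}\lesssim\|\omega\|_{H^m}$ is available) is the genuine crux and is handled correctly. One cosmetic slip: the correction kernel $-x/(\pi(x^2+a^2))$ is bounded and smooth but not integrable on $\R$; since the proposition is posed on $\Sp^1$ you should instead invoke the periodized correction $-\tfrac{1}{2\pi}\partial_x\log(\sin^2(\mu x)+a)$ from \eqref{vel}, which is smooth and periodic, so the smoothing claim for $R$ stands unchanged.
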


\medskip
The proof of the proposition is relatively standard. A short discussion of this topic can be found in \cite{sixAuthors}. A similar statement is also proved in detail in \cite{CKY}.
An analogous result will apply to the systems with more general Biot-Savart law that we will introduce later.

\section{The Modified Hou-Luo Kernel: Periodic Case}\label{percase}

\medskip
In this section, we prove finite time blow up of the system with the kernel given by $\eqref{kern}$ and periodic initial data. From now on,
we will refer to the kernel given by $\eqref{HL}$ as the Hou-Lou kernel, and to the kernel \eqref{kern} as the modified Hou-Luo kernel. We will denote the
velocity corresponding to the Hou-Luo kernel as $u_{HL}.$
In addition, we will consider solutions with mean zero vorticity. A straightforward calculation shows that the mean zero
property is conserved for all times for regular solutions.

Let us start by deriving a simpler expression for the Biot-Savart law in the case when the solution is periodic with period $L$. Our computations will be formal, ignoring the lack of absolute convergence of the
integrals involved; they can be made fully rigorous using standard regularization and approximation procedures at infinity.
We periodize the kernel associated with our velocity
\begin{align*}
u(x,t) &= \frac{1}{\pi} \int_{-\infty}^\infty \omega(y) \log\frac{|x-y|}{\sqrt{(x-y)^2+a^2}}\, dy = \frac{1}{\pi} \sum_{n\in \Z} \int_0^L \omega(y) \log\frac{|x-y+nL|}{\sqrt{(x-y+nL)^2+a^2}}\, dy \\
&=\frac{1}{\pi} \sum_{n\in \Z} \int_0^L \omega(y)\log|x-y+nL|\, dy\\ &\,  - \frac{1}{2\pi}\sum_{n\in \Z} \int_0^L \omega(y)\left(\log((x+ia-y)+nL)+\log((x-ia-y)+nL)\right)\, dy \\
&= \frac{1}{\pi} \int_0^L \omega(y) \log\left| (x-y)\prod_{n=1}^\infty \left(1- \frac{(\mu (x-y))^2}{\pi^2 n^2}\right) \right|\, dy  \\
&\quad -\frac{1}{2\pi} \int_0^L \omega(y)  \log\left| (x+ia-y)\prod_{n=1}^\infty \left(1- \frac{(\mu (x+ia-y))^2}{\pi^2 n^2}\right) \right| \, dy \\
&\quad -\frac{1}{2\pi} \int_0^L \omega(y)  \log\left| (x-ia-y)\prod_{n=1}^\infty \left(1- \frac{(\mu (x-ia-y))^2}{\pi^2 n^2}\right) \right| \, dy \\
&= \frac{1}{\pi} \int_0^L \omega(y)\log|\sin[\mu(x-y)]|\, dy -\frac{1}{2\pi} \int_0^L \omega(y) \log|\sin(\mu(x-ia-y))\sin(\mu(x+ia-y))|\, dy
\end{align*}
where we set $\mu=\pi/L$. In the last step we used the fact that
\[ f(z) = z \prod_{n=1}^\infty \left( 1- \left( \frac{\mu z}{\pi n} \right)^2 \right) \]
is an entire function, its zeroes coincide with those of $\sin (\mu z),$ and $\left. f'(z) \right|_{z=0} = 1.$
A quick computation leads to
\begin{align*}
\sin\mu(x-ia)\sin\mu(x+ia) &= \frac{e^{i\mu(x-ia)}-e^{-i\mu(x-ia)}}{2i} \frac{e^{i\mu(x+ia)}-e^{-i\mu(x+ia)}}{2i} \\
&= \frac{e^{2\mu a}+e^{-2\mu a}}{4}- \frac{e^{2i\mu x}+e^{-2i\mu x}}{4} \\
&= \frac{1}{2} (\cosh(2\mu a)-\cos(2\mu x)) = \frac{1}{2}(\cosh(2\mu a)-1) + \sin^2(\mu x)
\end{align*}
By a slight abuse of notation let us rename the quantity $(1/2) (\cosh(2\mu a)-1)$ to be our new $a>0$. We generally think of $a$ as being small, though our estimates later will be true for arbitrary positive $a$.
Note that the new $a$ has dimension of $length^2.$ Combining the above calculations, our velocity $u$ can be now written as
\begin{align}
\label{vel}
u(x) =\frac{1}{2\pi} \int_0^L\omega(y)\left(\log|\sin^2[\mu(x-y)]|-\log|\sin^2[\mu(x-y)]+a|\right)  \, dy.
\end{align}

The main result of this section is the following
\begin{theorem}
\label{periodic}
There exist initial data with mean zero vorticity such that solutions to $\eqref{model1}$ and $\eqref{model2}$, with velocity given by $\eqref{vel}$ blow up in finite time. That is, there exists a time $T^*$ such that we have $\eqref{BKM}$.
\end{theorem}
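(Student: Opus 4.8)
The plan is to adapt the blow-up scheme of \cite{sixAuthors} to the velocity \eqref{vel}, replacing the exact trigonometric identities available for the pure $\log|\sin|$ kernel by estimates that survive the smooth correction $-\log(\sin^2\mu(x-y)+a)$.

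\textbf{Symmetry reduction and sign propagation.} First I would restrict to the invariant class of data with $\omega_0$ odd and $\theta_0$ even on the period, with $\theta_0$ having a strict local minimum at the origin, so that $\theta_0'\ge 0$ and $\omega_0\ge 0$ on $(0,L/2)$. One checks directly from \eqref{vel} that $u$ is then odd and $L$-periodic, so $u(0,t)=u(L/2,t)=0$: the points $0$ and $L/2$ are stagnation points fixed by the flow, and it suffices to work on $[0,L/2]$. The sign structure is preserved: writing $D_t=\partial_t+u\partial_x$, equation \eqref{model2} gives $D_t\theta_x=-u_x\theta_x$, a linear ODE along characteristics that preserves the zero set and sign of $\theta_x$, while \eqref{model1} gives $D_t\omega=\theta_x\ge 0$; since characteristics cannot cross the stagnation endpoints, $\theta_x\ge0$ and $\omega\ge 0$ persist on $(0,L/2)$. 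A computation from \eqref{vel} gives
\[
u_x(0,t)=-\frac{2\mu a}{\pi}\int_0^{L/2}\omega(y,t)\,\frac{\cos(\mu y)}{\sin(\mu y)\bigl(\sin^2(\mu y)+a\bigr)}\,dy\le 0,
\]
so the flow is compressive near the origin, pushing mass inward and steepening $\theta$.

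\textbf{Functional and differential inequality.} Motivated by the identity above, I would track the weighted vorticity $I(t)=\int_0^{L/2}\omega(y,t)\,W_a(y)\,dy$ with $W_a(y)=\frac{\cos(\mu y)}{\sin(\mu y)(\sin^2(\mu y)+a)}\ge 0$ (note $W_a(y)\sim 1/(a\mu y)$ near $0$ and $W_a(L/2)=0$), so that $u_x(0,t)=-\tfrac{2\mu a}{\pi}I(t)$. Differentiating and integrating by parts — the boundary contributions vanish because $u$ and $\omega$ both vanish linearly at the endpoints — yields
\[
\dot I=\int_0^{L/2}\theta_x W_a\,dy+\int_0^{L/2}\omega\,u_x W_a\,dy+\int_0^{L/2}\omega\,u\,W_a'\,dy .
\]
Here the first term is a nonnegative source, and since $u$ and $u_x$ are linear in $\omega$ through \eqref{vel}, the last two terms constitute the quadratic feedback of the vorticity on its own transport. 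The heart of the matter is to control the sign of this feedback and to bound the $\theta_x$-source from below — sustained by the transport of $\theta$ under the compressive flow, which concentrates the gradient of $\theta$ near the origin — so as to obtain a differential inequality of Riccati type forcing $I(t)\to\infty$ at some finite $T^*$. Then $|u_x(0,t)|=\tfrac{2\mu a}{\pi}I(t)$ is non-integrable in time; since $|u_x(0,t)|\le\|u_x(\cdot,t)\|_{L^\infty}$, the Beale--Kato--Majda criterion \eqref{BKM} of Proposition~\ref{lwp} yields finite-time blow up, proving Theorem~\ref{periodic}.

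\textbf{Main obstacle.} The hard part is the positivity structure of the feedback for the modified kernel. Differentiating \eqref{vel} produces $u_x$ as the leading periodic Hilbert-type singular kernel (identical to the Hou--Luo case near the diagonal) plus a smooth correction of indefinite sign generated by the $\log(\sin^2\mu(x-y)+a)$ term. The algebraic positive-definiteness identities of \cite{sixAuthors} rely on the exact structure of the pure kernel and do not transfer directly; I expect the core of the work to be new pointwise and monotonicity estimates showing that the singular leading term dominates the correction uniformly in $a>0$, so that the quadratic terms in the expression for $\dot I$ do not oppose the growth driven by the source. Quantifying the $\theta_x$-source through the transported structure of $\theta$ is the second delicate point, since it requires making precise how the compressive flow squeezes the fixed range of $\theta$ into a shrinking neighborhood of the origin.
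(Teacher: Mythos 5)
Your setup (the odd/even symmetry class, sign propagation along characteristics, and the identity $u_x(0,t)=-\tfrac{2\mu a}{\pi}\int_0^{L/2}\omega(y,t)W_a(y)\,dy$) is correct, and the observation that for the modified kernel $u_x(0,t)$ is exactly a weighted vorticity integral is a nice one. But the proposal stops precisely where the proof has to begin: you never derive the Riccati-type inequality, you only announce that "the heart of the matter" is to control the sign of the quadratic feedback and to bound the source from below. Worse, your choice of functional $I=\int_0^{L/2}\omega W_a\,dy$ has a structural problem. Writing
\[
\dot I=\int_0^{L/2}\theta_x W_a\,dy+\int_0^{L/2}\omega\,\bigl(uW_a\bigr)'\,dy,
\]
the transport part $\int\omega\,u\,W_a'$ is indeed nonnegative ($u\le0$, $W_a'\le0$, $\omega\ge0$ on $[0,L/2]$), but the stretching part $\int\omega\,u_x\,W_a$ is a quadratic form in $\omega$ whose symmetrized kernel has no evident sign (note $u_x$ is still a singular integral of $\omega$; positivity statements of this type are exactly the hard content of \cite{sixAuthors} and of this paper), and, more importantly, neither term supplies anything comparable to $I^2$. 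The source $\int\theta_x W_a$ is controlled only through where $\theta_x$ lives relative to the singularity of $W_a$, and turning "the compressive flow concentrates $\theta_x$ near the origin" into a quantitative lower bound would require tracking the flow map, which you do not do. So no closed differential inequality forcing $I\to\infty$ is obtained, or is even plausibly within reach of the estimates you sketch.

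The paper's route is designed precisely to manufacture the $I^2$ term: it monitors $I(t)=\int_0^{L/2}\theta(x,t)\cot(\mu x)\,dx$ (a weighted integral of $\theta$, not of $\omega$), shows $\dot I\ge CJ$ with $J=\tfrac{2}{\pi}\int_0^{L/2}\theta\omega\cot(\mu x)\,dx$, and then $\dot J\ge \tfrac{c}{L^2}I^2+(\text{nonnegative terms})$, where the $I^2$ comes from the forcing term $\tfrac{\mu}{2}\int\theta^2\csc^2(\mu x)\,dx$ via Cauchy--Schwarz, and the nonnegativity of the $\omega$-quadratic remainder is the content of Lemma \ref{keylemma}: a uniform negative upper bound $F(x,y,a)\le-C(a)$ for $x<y$ (a four-case analysis near and away from the origin), monotonicity of $F$ in $x$ for $x>y$, and positivity of the symmetrized kernel $G(x,y,a)$, proved by showing a certain quadratic polynomial in $a$ has coefficients $A_2=A_0=0$ and $A_1\ge0$. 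The conclusion $\ddot I\ge CI^2$ then blows up by the elementary ODE Lemma \ref{ode}, contradicting the a priori bound $|I(t)|\le C\|\theta_{0x}\|_\infty\exp(\int_0^t\|u_x\|_\infty)$ coming from \eqref{BKM}. None of this kernel analysis, nor any substitute for it, appears in your proposal, so as written it is a plan rather than a proof.
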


\bigskip
We will consider the following type of initial data:
\begin{itemize}
\item $\theta_{0x}, \omega_0$ smooth, odd, periodic with period $L$
\item $\theta_{0x}, \omega_0 \ge 0$ on $[0,\frac{1}{2}L]$.
\item $\theta_0(0)=0$
\item $\|\theta_0\|_\infty \le M$
\end{itemize}
This can be visualized as follows:

\begin{center}
\begin{tikzpicture}
\node at (0,-.3) {$0$};
\draw (0,0) -- (0,3.6);
\draw (0,0) -- (7.8,0);
\draw (0,0) to [out=0, in=180] (1.5, 3) to [out=0, in=180] (3,0);
\node at (1.5,3.3) {$\omega_0$};

\node at (3,-.3) {};

\draw (3,-.15) -- (3,0);
\draw (0,0) to [out=0, in=180] (2.4,1.5);
\draw (2.4,1.5) -- (5.4,1.5);
\node at (3.9,-.3) {$\frac{L}{2}$};
\node at (3.9,1.8) {$\theta_0$};
\draw (5.4,1.5) to [out=0, in=180] (7.8,0);
\draw (4.8,0) to [out=0, in=180] (6.3,-3) to [out=0,in=180] (7.8,0);
\node at (6.3,-2.55) {$\omega_0$};
\node at (7.8,-.3) {$L$};
\end{tikzpicture}
\end{center}

\noindent
From {\bf Proposition \ref{lwp}} one has the local well-posedness for our system($\eqref{model1}$$\eqref{model2}$$\eqref{vel}$). By local well posedness (in particular, uniqueness) and the transport structure of the system, all the above properties for our choice of initial data will be propagated in time up until possible blow up time.

The proof of singularity formation will follow by contradiction. The overall plan of the proof is based on finding appropriate functional of the solutions that blows up in finite time and goes back at least to the classical blow up argument in the nonlinear Schr\"odinger equation (see e.g. \cite{glassey1977blowing}). The motivation for the choice of initial data above is the following possible blow up scenario: we will have $u\le 0$ on $[0,L/2]$ and so $\theta$ will be pushed towards the origin by the flow 
increasing its derivative. This also causes $\omega$ to be pushed towards the origin while increasing its $L^\infty$ norm until there is velocity gradient blow up at the origin. The argument is similar in spirit to \cite{CCF} where the authors consider the quantity
$$
\int_0^{x_0}\frac{\omega(x,t)}{x}dx.
$$
Due to the periodic structure, the more natural quantity to monitor is, similarly to \cite{sixAuthors},
$$
\int_0^{\frac{L}{2}}\theta(x,t)\cot(\mu x)dx.
$$
Since $x=0$ is the stagnant point of the flow for all times while solution remains smooth, and since $\theta_0(0)=0,$ blow up of the above integral implies loss of regularity of the solution.
\\

We begin with derivation of some useful estimates for $u(x).$ 
Using that, due to our symmetry assumptions, our initial data is also odd with respect to $x=\frac{L}{2},$ we can write $u$ as
\begin{align*}
u(x) &=\frac{1}{\pi} \left[ \int_0^{L/2}+\int_{L/2}^L\right] \omega(y)\left(\log|\sin^2[\mu(x-y)]|-\log|\sin^2[\mu(x-y)]+a|\right)  \, dy \\
&= \frac{1}{\pi} \int_0^{L/2}\left( \log \left| \frac{\sin^2\mu (x-y)}{\sin^2\mu (x+y)}\right| + \log \left| \frac{\sin^2\mu (x+y)+a}{\sin^2\mu (x-y)+a}\right| \right)\omega(y)\, dy.
\end{align*}
Define
\begin{align}\label{Fper}
F(x,y,a) = \frac{\tan \mu y}{\tan \mu x} \left( \log \left| \frac{\sin^2\mu (x-y)}{\sin^2\mu (x+y)}\right| + \log \left| \frac{\sin^2\mu (x+y)+a}{\sin^2\mu (x-y)+a}\right| \right).
\end{align}
Then the Biot-Savart law \eqref{vel} can be written in the following form, which will be handy in the proof:
\begin{align}
\label{eq:general}
u(x)\cot(\mu x)=\frac{1}{\pi} \int_0^{L/2} F(x,y,a) \omega(y)\cot(\mu y)\, dy
\end{align}

The majority of this section will be devoted to establishing properties of $F$ that will allow for a proof of finite time blow up analogous
to the one for $HL$ model in \cite{sixAuthors}. These properties are contained in the following lemma.

\begin{lemma}
\label{keylemma}

{\bf (a)} There exists a positive constant $C$ depending on $a$ such that $F(x,y,a)\le -C < 0$ for $0<x<y< L/2$.

\smallskip
{\bf (b)} For any $0<y<x<\frac{L}{2}$, $F(x,y,a)$ is increasing in $x$.

\smallskip
{\bf (c)} For any $0<x,y<\frac{L}{2}$, $\cot(\mu y)(\partial_xF)(x,y,a)+\cot(\mu x)(\partial_x F)(y,x,a)$ is positive.

\end{lemma}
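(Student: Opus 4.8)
The plan is to handle all three parts through one simplification of the bracket in \eqref{Fper}. Writing $P=\sin^2\mu(x-y)$ and $Q=\sin^2\mu(x+y)$ (both in $(0,1]$ for $0<x,y<L/2$), the two logarithms combine to
\[
B(x,y):=\log\frac{P}{Q}+\log\frac{Q+a}{P+a}=\log\frac{P(Q+a)}{Q(P+a)},
\]
so that $F=\tan(\mu y)\cot(\mu x)\,B$. Since $P(Q+a)-Q(P+a)=a(P-Q)$ and the product-to-sum identity gives $Q-P=\sin(2\mu x)\sin(2\mu y)>0$, we have $B<0$ on all of $0<x,y<L/2$. I will also use the exact representation $-B=a\int_P^Q\frac{dr}{r(r+a)}$ and the factored form of the $x$-derivative: a direct computation yields $\partial_x B=\mu\big(h(\mu(x-y))-h(\mu(x+y))\big)$, where
\[
h(\gamma)=2\cot\gamma-\frac{\sin 2\gamma}{\sin^2\gamma+a}=\frac{2a\cos\gamma}{\sin\gamma\,(\sin^2\gamma+a)}.
\]

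For part (a), with $0<x<y<L/2$, I would apply $\log(1-z)\le-z$ to $B=\log\big(1-\tfrac{a(Q-P)}{Q(P+a)}\big)$ (the argument lies in $(0,1)$) and use $\tan(\mu y)/\tan(\mu x)>0$ to get $F\le-\frac{\tan(\mu y)}{\tan(\mu x)}\frac{a(Q-P)}{Q(P+a)}$. Since $\frac{\tan(\mu y)}{\tan(\mu x)}(Q-P)=4\sin^2(\mu y)\cos^2(\mu x)$, and since $x<y$ forces $\sin\mu(x+y)=\sin\mu x\cos\mu y+\cos\mu x\sin\mu y<2\cos\mu x\sin\mu y$, i.e.\ $Q<4\sin^2(\mu y)\cos^2(\mu x)$, together with $P+a\le1+a$ this gives $F\le-\frac{4a\sin^2(\mu y)\cos^2(\mu x)}{Q(P+a)}\le-\frac{a}{1+a}=:-C$. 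For part (b), with $0<y<x<L/2$, I write $\partial_xF=\tan(\mu y)\big(-\mu\csc^2(\mu x)\,B+\cot(\mu x)\,\partial_xB\big)$; the first term is positive because $B<0$, and for the second I note that $h$ is positive and strictly decreasing on $(0,\pi/2)$ and negative on $(\pi/2,\pi)$, so since $0<\mu(x-y)<\pi/2$ and $\mu(x+y)\in(0,\pi)$ with $\mu(x-y)<\mu(x+y)$, in either case $h(\mu(x-y))>h(\mu(x+y))$, hence $\partial_xB>0$ and $\partial_xF>0$.

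For part (c) the quantity is symmetric under $x\leftrightarrow y$, so I assume $x\ge y$. Using the formula from (b) together with $\partial_xB(y,x)=-h(\mu(x-y))-h(\mu(x+y))$ (as $h$ is odd), and cancelling the common positive factor $\mu$, the expression becomes
\[
E=-B\big(\csc^2\mu x+\csc^2\mu y\big)+h(\alpha)(\cot\mu x-\cot\mu y)-h(\beta)(\cot\mu x+\cot\mu y),
\]
with $\alpha=\mu(x-y)$, $\beta=\mu(x+y)$. Substituting the two combinations $\cot\mu x-\cot\mu y$ and $\cot\mu x+\cot\mu y$ and the explicit $h$, then multiplying by $\sin^2(\mu x)\sin^2(\mu y)>0$, the claim $E>0$ reduces to $-B(\sin^2\mu x+\sin^2\mu y)>2a\sin\mu x\sin\mu y\big(\frac{\cos\alpha}{\sin^2\alpha+a}+\frac{\cos\beta}{\sin^2\beta+a}\big)$. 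Inserting $-B=a\int_P^Q\frac{dr}{r(r+a)}$ and the identities $\sin^2\mu x+\sin^2\mu y=1-\cos\alpha\cos\beta$, $2\sin\mu x\sin\mu y=\cos\alpha-\cos\beta$, and writing $c=\cos\alpha$, $d=\cos\beta$ (so $c>d$, $P=1-c^2$, $Q=1-d^2$), the whole statement becomes the scalar inequality
\[
(1-cd)\int_{P}^{Q}\frac{dr}{r(r+a)}>(c-d)\Big(\frac{c}{1-c^2+a}+\frac{d}{1-d^2+a}\Big).
\]

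The main obstacle is this last inequality. The key structural observation is that at $a=0$ it is an \emph{exact equality}: the left side becomes $(1-cd)\frac{Q-P}{PQ}$, the right side $(c-d)\big(\frac cP+\frac dQ\big)$, and the identity $cQ+dP=(c+d)(1-cd)$ (using $P=1-c^2$, $Q=1-d^2$) makes them coincide. Hence the strict inequality for $a>0$ comes entirely from the $a$-dependence, and I would prove it by showing that the difference of the two sides vanishes at $a=0$ and is strictly increasing in $a$ on $(0,\infty)$; a case split on the sign of $\cos\beta$ isolates the essential regime, since when $\mu(x+y)\ge\pi/2$ the second right-hand term is $\le0$ and the bound is easy. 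I expect pinning down the correct sign of the $a$-derivative to be the principal difficulty, because crude quadrature or the linear $\log(1-z)\le-z$ bound used in (a) are too lossy here: the left side diverges as $x\to y$ while any linear-in-$(Q-P)$ lower bound for $-B$ stays bounded, so the exact value of the logarithm must be exploited. By contrast, parts (a) and (b) are short once $B$ is written as a single logarithm and $\partial_xB$ is put in the factored form $h(\mu(x-y))-h(\mu(x+y))$.
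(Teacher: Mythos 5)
Your parts (a) and (b) are correct. Part (b) is essentially the paper's argument: the paper also splits $\partial_x F$ into a term proportional to $-B>0$ and a term $\cot(\mu x)\bigl(g(x-y)-g(x+y)\bigr)$ with $g$ equal to your $h$ up to rescaling, and concludes by monotonicity of $g$. Part (a), however, is a genuinely different and much shorter route: the paper proves (a) by a four-case analysis on $0<x<y<L/4$ (splitting at the scale $\sqrt{a^*}/\mu$, invoking Lemma~\ref{HLlemma} to get $-2K\le-4$ and bounding the correction term below $4$, plus compactness arguments), followed by symmetry and a separate treatment of $0<x<L/4<y<L/2$. Your one-line chain $F\le -s\,\tfrac{a(Q-P)}{Q(P+a)}=-\tfrac{4a\sin^2(\mu y)\cos^2(\mu x)}{Q(P+a)}<-\tfrac{a}{P+a}\le-\tfrac{a}{1+a}$, resting on $\log(1-z)\le-z$ and $Q<4\sin^2(\mu y)\cos^2(\mu x)$ for $x<y$, is correct, covers the whole region $0<x<y<L/2$ uniformly, and produces the explicit constant $C=a/(1+a)$.

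Part (c) contains a genuine gap. Your reduction to the scalar inequality in $(c,d,a)$ is correct (I checked the identities $\sin^2\mu x+\sin^2\mu y=1-cd$, $2\sin\mu x\sin\mu y=c-d$, and $cQ+dP=(c+d)(1-cd)$, which gives the exact equality at $a=0$ — this is the same observation as the paper's $G(x,y,0)=0$). But the proposed completion — that the difference
\[
\Phi(a):=(1-cd)\int_P^Q\frac{dr}{r(r+a)}-(c-d)\Bigl(\frac{c}{P+a}+\frac{d}{Q+a}\Bigr)
\]
is strictly increasing on $(0,\infty)$ — is false. As $a\to\infty$ one has $a\,\Phi(a)\to(1-cd)\log(Q/P)-(c^2-d^2)$, which is strictly positive on the admissible region (use $\log t\ge 2(t-1)/(t+1)$ and $(c-d)^2\ge0$); hence $\Phi(a)>0$ for large $a$ but $\Phi(a)\to0$, so $\Phi$ must eventually decrease. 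Concretely, $\Phi'(a)\sim a^{-2}\bigl[(c^2-d^2)-(1-cd)\log(Q/P)\bigr]<0$ for large $a$, e.g.\ at $c=0.9$, $d=0.1$ (an admissible pair, corresponding to small $x+y$), and this occurs precisely in the regime $\cos\beta>0$ that you identify as the essential one. The quantity that \emph{is} monotone in $a$ is $a\,\Phi(a)$ — i.e.\ you should not divide out the factor $a$; $a\Phi(a)$ equals $\sin^2(\mu x)\sin^2(\mu y)\,G(x,y,a)/\mu$ in the paper's notation. The paper shows $\partial_aG\ge0$ by clearing denominators: $\mu^{-1}\bigl((P+a)(Q+a)\bigr)^2\partial_aG$ is a quadratic $A_2a^2+A_1a+A_0$ in $a$ whose coefficients $A_2$ and $A_0$ vanish identically (after substituting $s=\tan(\mu x)/\tan(\mu y)$) and whose middle coefficient $A_1$ is non-negative, so $\partial_aG=\mu A_1a/\bigl((P+a)(Q+a)\bigr)^2\ge0$ and $G\ge G|_{a=0}=0$. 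Your framework accommodates this fix with no other changes, but as written the monotonicity claim you rely on does not hold, and you acknowledge not having verified the sign of the $a$-derivative, so the key step of (c) is unproven.
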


\medskip
\noindent
Note that $F$ is not symmetric in $x$ and $y$. Define
 $$
 K(x,y)=\frac{\tan \mu y}{\tan \mu x}  \log \left| \frac{\sin\mu (x+y)}{\sin\mu (x-y)}\right|,
 $$
 then
\begin{align}\label{Fx1}
F(x,y,a)=-2K(x,y)+ \frac{\tan \mu y}{\tan \mu x}\log \left| \frac{\sin^2\mu (x+y)+a}{\sin^2\mu (x-y)+a}\right|.
\end{align}

The term $K(x,y)$ arises from the original HL model and one can view it as the main contributor from $F$ towards the blow up. In order to show Lemma \ref{keylemma}, we first need the following technical lemma for $K(x,y)$.
\begin{lemma}
\label{HLlemma}
For simplicity, let us write $K(x,y)$ in the following form:
\begin{align}
K(x,y) = s\log \left| \frac{s+1}{s-1}\right|, \quad with \quad s= \frac{\tan(\mu y)}{\tan(\mu x)}.
\end{align}
Then it has the following properties:

\medskip
{\bf (a)} $K(x,y)\ge 0$ for all $x,y\in(0, \frac{1}{2}L)$ with $x\ne y$

\medskip
{\bf (b)} $K(x,y) \geq 2$ and $K_x(x,y) \geq 0$ for all $0<x<y<\frac{1}{2}L$

\medskip
{\bf (c)} $K(x,y) \geq 2s^2$ and $K_x(x,y) \geq 0$ for all $0<y<x<\frac{1}{2}L$

\end{lemma}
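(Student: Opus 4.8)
The whole lemma is a statement about the single function $g(s)=s\log\left|\frac{s+1}{s-1}\right|$ evaluated at $s=\tan(\mu y)/\tan(\mu x)$, so the plan is to set up the dictionary between $x,y$ and $s$ once and then prove each item on the appropriate $s$-range. On $(0,L/2)$ the map $x\mapsto\tan(\mu x)$ is positive and strictly increasing, so $s>0$ always, with $s>1$ exactly in the regime $0<x<y$ of part (b) and $0<s<1$ exactly in the regime $0<y<x$ of part (c). The one computation reused everywhere is $s_x=\partial_x\!\left(\tan(\mu y)/\tan(\mu x)\right)=-\mu\tan(\mu y)\csc^2(\mu x)$, which is strictly negative throughout the region.

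Part (a) is immediate: since $s>0$, the sign of $g(s)$ equals the sign of $\log\left|\frac{s+1}{s-1}\right|$, and $\left|\frac{s+1}{s-1}\right|>1$ for every $s>0$ with $s\ne1$ (for $s>1$ the ratio is $\frac{s+1}{s-1}>1$, for $0<s<1$ it is $\frac{1+s}{1-s}>1$). Hence $g(s)>0$.

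For part (b) I handle the two assertions through $g$. For $K\ge2$ I show that $g$ strictly decreases on $(1,\infty)$ with $\lim_{s\to\infty}g(s)=2$: computing $g'(s)=\log\frac{s+1}{s-1}-\frac{2s}{s^2-1}$, differentiating again to get $g''(s)=\frac{4}{(s^2-1)^2}>0$, and using $g'(s)\to0$ as $s\to\infty$, one sees that an increasing $g'$ with limit $0$ stays negative, so $g$ decreases to its infimum $2$. The monotonicity then follows from the chain rule $K_x=g'(s)\,s_x$: on $(1,\infty)$ both $g'<0$ and $s_x<0$, so $K_x\ge0$. For part (c), the bound $K\ge2s^2$ is, after dividing by $s>0$, the inequality $\log\frac{1+s}{1-s}\ge2s$ on $(0,1)$, which I prove by setting $\phi(s)=\log\frac{1+s}{1-s}-2s$ and checking $\phi(0)=0$ together with $\phi'(s)=\frac{2s^2}{1-s^2}\ge0$, so $\phi\ge0$. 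The monotonicity claim in part (c) is handled by the same chain-rule identity $K_x=g'(s)\,s_x$ restricted to $0<s<1$.

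The step I expect to be the main obstacle is the sign bookkeeping in the chain rule behind the two monotonicity statements: one must determine the sign of $g'$ on each of the ranges $s>1$ and $0<s<1$ and pair it correctly against the single always-negative factor $s_x$, so that the product reproduces the monotonicity direction recorded in the lemma in each region. The remaining ingredients—part (a), and the lower bounds $K\ge2$ and $K\ge2s^2$—reduce to the elementary one-variable monotonicity and convexity estimates for $g$ and $\phi$ sketched above, and are routine.
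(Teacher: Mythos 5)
Your reduction of the whole lemma to the one-variable function $g(s)=s\log\left|\frac{s+1}{s-1}\right|$ together with $s_x=-\mu\tan(\mu y)\csc^2(\mu x)<0$ is sound, and your arguments for part (a), for $K\ge 2$ when $s>1$ (convexity $g''(s)=\frac{4}{(s^2-1)^2}>0$, $g'\nearrow 0$ at infinity, hence $g'<0$ and $g$ decreases to its limit $2$), for $K_x\ge0$ when $x<y$, and for $K\ge 2s^2$ when $0<s<1$ (via $\phi(s)=\log\frac{1+s}{1-s}-2s$, $\phi(0)=0$, $\phi'\ge0$) are all correct. Note that the paper itself gives no proof, deferring to \cite{sixAuthors}, Lemma 4.1, so a self-contained calculus argument of this kind is a reasonable substitute for the cited one.

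However, the step you explicitly deferred --- the sign bookkeeping for the monotonicity claim in part (c) --- is exactly where the proposal fails, and it cannot be completed in the direction you predict. On $0<s<1$ one computes $g'(s)=\log\frac{1+s}{1-s}+\frac{2s}{1-s^2}$, a sum of two strictly positive terms, so $g'>0$ there; paired with $s_x<0$, the chain rule gives $K_x=g'(s)\,s_x<0$ for $0<y<x<\frac{L}{2}$, the \emph{opposite} of the inequality $K_x\ge 0$ recorded in part (c). (A numerical check with $\mu=\pi/2$, $y=0.2$: $K\approx 0.22$ at $x=0.5$ but $K\approx 0.11$ at $x=0.6$.) The statement as printed contains a sign misprint: the true and internally consistent inequality is $K_x\le 0$ on $0<y<x$. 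Indeed, by \eqref{Fx1} one has $F(x,y,a)=-2K(x,y)+\frac{\tan\mu y}{\tan\mu x}\log\left|\frac{\sin^2\mu(x+y)+a}{\sin^2\mu(x-y)+a}\right|$, and Lemma \ref{keylemma}(b) asserts $F$ is increasing in $x$ on $0<y<x$ for every $a>0$; letting $a\to\infty$ annihilates the second term and forces $\partial_x K\le 0$ there, which is also the sign actually exploited in the blow-up argument, where the kernel enters $-u_{HL}(x)\cot(\mu x)$ with a plus sign. So rather than asserting that the chain rule ``reproduces the monotonicity direction recorded in the lemma'' --- for part (c) it demonstrably does not --- you should carry out the sign check you postponed, conclude $K_x\le0$ on $0<y<x$, and flag the misprint in the statement of Lemma \ref{HLlemma}(c).
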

The detailed proof of Lemma \ref{HLlemma} can be found in \cite{sixAuthors}, Lemma 4.1.
\medskip
\noindent

\noindent
\begin{proof} [Proof of Lemma \eqref{keylemma}(a)]

\smallskip
First, it is easy to see that $F$ is non-positive. Indeed
\begin{align}\label{nonneg}
\left|\frac{\sin^2\mu (x-y)}{\sin^2\mu (x+y)}\right|\left|\frac{\sin^2\mu (x+y)+a}{\sin^2\mu (x-y)+a}\right| = \left| \frac{1+ \frac{a}{\sin^2 \mu(x+y)}}{1+ \frac{a}{\sin^2 \mu(x-y)}}\right| \le 1
\end{align}
because $\sin^2 \mu(x-y) \le \sin^2 \mu(x+y)$ if $x,y \in [0,L/2].$

\smallskip
For the better upper bound, we first consider the region $0<x<y<L/4$. For the region $L/4<x<y<L/2$, if we take $x^*=\frac{L}{2}-x$, $y^*=\frac{L}{2}-y$, then $0<y^*<x^*<L/4$, and relabelling of the variables
brings the kernel to the original form. This means the argument for this region would follow from that for the region $0<x<y<L/4$. 
We divide our estimate of this region into four separate cases. Let $a^*=\mbox{min}\{a,\frac{1}{16}\}$.

\medskip
\noindent
\ul{{\bf Case 1: $\frac{\sqrt{a^*}}{\pi}L=\frac{\sqrt{a^*}}{\mu }<x<y<L/4$}}

\medskip
In this domain we have $\sin\mu y>\sin\mu x>\frac{\sin(\frac{\pi}{4})}{\frac{\pi}{4}}\mu x> \frac{1}{\sqrt{2}}\mu x>\frac{1}{\sqrt{2}}\sqrt{a^*}$, $\cos\mu x>\cos\mu y >\frac{1}{\sqrt{2}}$, hence
$$
\sin^2\mu (x-y)= \sin^2\mu(x+y)-4\sin\mu x\sin\mu y\cos\mu x\cos\mu y< \sin^2\mu (x+y)- a^*,
$$
so
\begin{align}\label{rest}
F(x,y,a)\leq \log \left| \frac{\sin^2\mu (x-y)}{\sin^2\mu (x+y)}\right| + \log \left| \frac{\sin^2\mu (x+y)+a^*}{\sin^2\mu (x-y)+a^*}\right| &= \log \left| \frac{1+\frac{a^*}{\sin^2\mu (x+y)}}{1+\frac{a^*}{\sin^2\mu (x-y)}} \right|\\ &\le \log \left| \frac{1+\frac{a^*}{\sin^2\mu (x+y)}}{1+\frac{a^*}{\sin^2\mu (x+y)-a^*}} \right| \le -C_0(a)<0
\end{align}
where $C_0(a)$ is a positive constant independent of $x,y$. In the last step we use the fact that the function $\ds  \left(1+\frac{a^*}{z}\right)\left(1+\frac{a^*}{z- a^*}\right)^{-1}=1-\frac{(a^*)^2}{z^2}$ is increasing in $z$ for $ a^*< z<1$ and fixed $a^*$.

\medskip
\noindent
\ul{{\bf Case 2: $0<x<y<\frac{\sqrt{a^*}}{\mu }<L/4$} }

\medskip
From Lemma \ref{HLlemma} (b), we know
\begin{equation}\label{Kest12}
-4\geq -2K(x,y)=
\frac{\tan \mu y}{\tan \mu x} \log \left| \frac{\sin^2\mu (x-y)}{\sin^2\mu (x+y)}\right|.
\end{equation}
so if we can show the contribution from the other part of $F(x,y,a)$ is bounded above by some constant less than $4$, we are done. Expanding, we have that second term in $\eqref{Fx1}$ is equal to
\begin{align}
\label{eq:term}
\frac{\tan \mu y}{\tan \mu x} \log \left| \frac{\sin^2 \mu x\cos^2 \mu y+ 2\sin \mu x\cos \mu y \sin \mu y \cos \mu x + \sin^2 \mu y\cos^2 \mu x + a}{\sin^2 \mu x\cos^2 \mu y - 2\sin \mu x\cos\mu  y\sin \mu y\cos \mu x+ \sin^2 \mu y\cos^2 \mu x+ a}\right|.
\end{align}
Since $0<y<\frac{\sqrt{a^*}}{\mu }\leq \frac{\sqrt{a}}{\mu}$, we know $\sin^2 \mu y\cos^2\mu  x<\sin^2\sqrt{a}\cdot 1< a$. Then we have that $\eqref{eq:term}$ is bounded above by
\begin{align}\label{logfunction1}
\frac{\tan \mu y}{\tan \mu x} \log \left| \frac{\sin^2 \mu x\cos^2 \mu y+ 2\sin \mu x\cos \mu y \sin \mu y \cos \mu x + 2\sin^2 \mu y\cos^2 \mu x}{\sin^2 \mu x\cos^2\mu  y - 2\sin \mu x\cos \mu y\sin \mu y\cos \mu x+ 2\sin^2 \mu y\cos^2 \mu x}\right|= s\log \left| \frac{2s+\frac{1}{s}+2}{2s+\frac{1}{s}-2}\right|
\end{align}
where $\ds s =\frac{\tan \mu y}{\tan\mu  x}$. As a function of $s$, by direct calculation we find the derivative of the right hand side of (\ref{logfunction1}) is
\begin{equation}\label{del1}
\frac{4s-8s^3}{1+4s^4}+\log\left| \frac{2s+\frac{1}{s}+2}{2s+\frac{1}{s}-2}\right|.
\end{equation}
By taking the derivative of (\ref{del1}), we find the second derivative of (\ref{logfunction1}) is
$$
-\frac{8(4s^4+4s^2-1)}{(4s^4+1)^2},
$$
which is negative for $s \geq 1$. And we know that
$$
\lim_{s\to\infty}\left(\frac{4s-8s^3}{1+4s^4}+\log\left| \frac{2s+\frac{1}{s}+2}{2s+\frac{1}{s}-2}\right|\right)=0
$$
which means the right hand side of (\ref{logfunction1}) is increasing in $s$ for $s>1$ and $$\ds \lim_{s\to \infty} s\log \left| \frac{2s+\frac{1}{s}+2}{2s+\frac{1}{s}-2}\right|=2.$$

\medskip
\noindent
\ul{{\bf Case 3: $\frac{\sqrt{a^*}}{2\mu }<x<\frac{\sqrt{a^*}}{\mu }<y< L/4$}}

\medskip
In this case, because we know that $x$ is bounded away from zero, we have $\ds s= \frac{\tan \mu y}{\tan \mu x}\le C_1(a)$ for some constant depending on $a$. Also, $\cos^2 \mu y\sin^2 \mu x\leq 1\cdot \sin^2\sqrt{a}\le a$. Then $\eqref{eq:term}$ is bounded above by
\begin{equation}\label{logfunction2}
s \log \left| \frac{s+2+\frac{2}{s}}{s-2+\frac{2}{s}}\right|.
\end{equation}
Similarly to the previous case, the second derivative of (\ref{logfunction2}) is negative for $s>1$ and the limit of the first derivative of (\ref{logfunction2}) as $s$ goes to infinity is zero, which means (\ref{logfunction2}) monotonically (while $s \geq 1$) increases to $4$ as $s\to\infty$. However, since $s$ is bounded above, the expression \eqref{logfunction2} can be bounded by some constant $C_2(a)$ which is strictly less than $4$.
On the other hand, note that \eqref{Kest12} still applies.

\medskip
\noindent
\ul{{\bf Case 4: $0<x<\frac{\sqrt{a^*}}{2\mu }<\frac{\sqrt{a^*}}{\mu }<y<L/4$}}

\medskip
On the set $A=\{(x,y): 0\le x\le \frac{\sqrt{a^*}}{2\mu }, \frac{\sqrt{a^*}}{\mu }\le y\le L/4\}$, $F(x,y,a)$ is a continuous negative function (since $|x-y|$ has a positive lower bound and points where $x=0$ are removable singularities). Since $F\neq 0$ on $A$ and $A$ is compact, $F$ achieves a maximum $C_3(a)$ which is strictly less than $0$.

\medskip
This completes the analysis for the region $0<x<y<L/4$, and therefore for the region $L/4 < x< y < L/2$ by symmetry considerations. Now, we are left the domain $0<x<L/4<y<L/2$.

\medskip
This case is simpler and the analysis is divided in the following two cases. First, suppose $0<L/8<x<L/4<y<3L/8<L/4$ Then $\frac{3\pi}{8}<\mu(x+y)< \frac{5\pi}{8}$ and $0<\mu(y-x)<\frac{\pi}{4}$ so there exists $\epsilon>0$ such that $\sin^2 \mu(x+y)\ge \frac{1}{2}+\epsilon$. However, $\sin^2 \mu(x-y) < \frac{1}{2}$. From this, we get $\sin^2 \mu(x+y)-\sin^2\mu(x-y)\geq \epsilon^*$ for some constant $\epsilon^*$, which means (\ref{rest}) follows if we replace the $a^*$ by $\epsilon^*$. Then we get the desired estimate. If $x$ and $y$ are not in this region, there exists a constant $c>0$ such that $y-x>c>0$, then again by the same argument as in the {\bf Case 4} and we get the desired inequality.

\smallskip
\noindent
This completes the proof of (a).
\end{proof}

\medskip
\begin{proof}[{\bf Proof of \ref{keylemma}(b)}]
We compute directly and get
\begin{align*}
\cot(\mu y)(\partial_xF)(x,y,a)&=-\mu \csc^2(\mu x)\left(\log\left(\frac{\sin^2\mu (x-y)}{\sin^2\mu (x+y)}\right)+
\log\left(\frac{\sin^2\mu (x+y)+a}{\sin^2\mu (x-y)+a}\right)\right)\\
&+\mu \cot(\mu x)\left[\frac{2\sin\mu (x-y)\cos\mu (x-y)}{\sin^2\mu (x-y)}-\frac{2\sin\mu (x-y)\cos\mu (x-y)}{\sin^2\mu (x-y)+a}\right]\\
&-\mu \cot(\mu x)\left[\frac{2\sin\mu (x+y)\cos\mu (x+y)}{\sin^2\mu (x+y)}-\frac{2\sin\mu (x+y)\cos\mu (x+y)}{\sin^2\mu (x+y)+a}\right]\\
&=-\mu \csc^2(\mu x)\left(\log\left(\frac{\sin^2\mu (x-y)}{\sin^2\mu (x+y)}\right)+\log\left(\frac{\sin^2\mu (x+y)+a}{\sin^2\mu (x-y)+a}\right)\right)\\
&+\mu \cot(\mu x)\left[\frac{2a\sin\mu (x-y)\cos\mu (x-y)}{\sin^2\mu (x-y)(\sin^2\mu (x-y)+a)}-\frac{2a\sin\mu (x+y)\cos\mu (x+y)}{\sin^2\mu (x+y)(\sin^2\mu (x+y)+a)}\right]\\
&=I+II.\\
\end{align*}

\noindent
The term $I$, by the same calculation as (\ref{nonneg}), is positive. The term $II$, when $x>y$, can be expressed as
$$
\cot(\mu x)(g(x-y)-g(x+y)),
$$
where $g(t)=\frac{\cos(\mu t)}{\sin(\mu t)(\sin^2(\mu t)+a)}$. It is easy to see that whenever $0<y<x<\frac{L}{2}$, $\cos\mu (x-y)\geq \cos\mu (x+y)$, $0 \leq \sin\mu (x-y)\leq \sin\mu (x+y)$. This means that $g(x-y)\geq g(x+y)$, which implies $II\geq 0$. This completes the proof of (b).
\end{proof}

\begin{proof}[{\bf Proof of \ref{keylemma}(c)}]

Now, for the final part of the lemma. First of all, we set
\begin{align*}
G(x,y,a)&=\cot(\mu y)(\partial_xF)(x,y,a)+\cot(\mu x)(\partial_x F)(y,x,a)\\
& = -\mu (\csc^2(\mu x)+\csc^2(\mu y))\left[\log\left(\frac{\sin^2\mu (x-y)}{\sin^2\mu (x+y)}\right)+
\log\left(\frac{\sin^2\mu (x+y)+a}{\sin^2\mu (x-y)+a}\right)\right]\\
&+\mu (\cot(\mu x)-\cot(\mu y))\frac{2a\sin\mu (x-y)\cos\mu (x-y)}{\sin^2\mu (x-y)(\sin^2\mu (x-y)+a)}\\
&-\mu (\cot(\mu x)+\cot(\mu y))\frac{2a\sin\mu (x+y)\cos\mu (x+y)}{\sin^2\mu (x+y)(\sin^2\mu (x+y)+a)}.\\
&=-\mu (\cot^2(\mu x)+\cot^2(\mu y)+2)\left[\log\left(\frac{\sin^2\mu (x-y)}{\sin^2\mu (x+y)}\right)+
\log\left(\frac{\sin^2\mu (x+y)+a}{\sin^2\mu (x-y)+a}\right)\right]\\
&-\mu \frac{2a\cos\mu (x-y)}{(\sin^2\mu (x-y)+a)\sin(\mu x)\sin(\mu y)}-\mu \frac{2a\cos\mu (x+y)}{(\sin^2\mu (x+y)+a)\sin(\mu x)\sin(\mu y)}\\
&=-\mu (\cot^2(\mu x)+\cot^2(\mu y)+2)\left[\log\left(\frac{\sin^2\mu (x-y)}{\sin^2\mu (x+y)}\right)+
\log\left(\frac{\sin^2\mu (x+y)+a}{\sin^2\mu (x-y)+a}\right)\right]\\
&-2\mu \cot(\mu x)\cot(\mu y)\left[\frac{a}{\sin^2\mu (x-y)+a}+\frac{a}{\sin^2\mu (x+y)+a}\right]\\
&-2\mu \left[\frac{a}{\sin^2\mu (x-y)+a}-\frac{a}{\sin^2\mu (x+y)+a}\right]
\end{align*}
Now our aim is to prove the positivity of $G(x,y,a)$. Notice that when $a=0$, $G(x,y,a)=0$,
as a consequence, to prove the positivity of $G(x,y,a)$, the only thing we need to show is that this function is increasing in $a$ for any $x,y$ in the domain. On the other hand,
\begin{align*}
\frac{1}{\mu }\partial_aG(x,y,a)&=(\cot^2(\mu x)+\cot^2(\mu y)+2)\left[\frac{1}{\sin^2\mu (x-y)+a}-\frac{1}{\sin^2\mu (x+y)+a}\right]\\
&-2\cot(\mu x)\cot(\mu y)\left[\frac{\sin^2\mu (x-y)}{(\sin^2\mu (x-y)+a)^2}+\frac{\sin^2\mu (x+y)}{(\sin^2\mu (x-y)+a)^2}\right]\\
&-2\left[\frac{\sin^2\mu (x-y)}{(\sin^2\mu (x-y)+a)^2}-\frac{\sin^2\mu (x+y)}{(\sin^2\mu (x+y)+a)^2}\right]\\
&=(\cot^2(\mu x)+\cot^2(\mu y)+2)\frac{\sin^2\mu (x+y)-\sin^2\mu (x-y)}{(\sin^2\mu (x-y)+a)(\sin^2\mu (x+y)+a)}\\
&-2\cot(\mu x)\cot(\mu y)\left[\frac{\sin^2\mu (x-y)}{(\sin^2\mu (x-y)+a)^2}+\frac{\sin^2\mu (x+y)}{(\sin^2\mu (x-y)+a)^2}\right]\\
&-2\left[\frac{\sin^2\mu (x-y)}{(\sin^2\mu (x-y)+a)^2}-\frac{\sin^2\mu (x+y)}{(\sin^2\mu (x+y)+a)^2}\right].\\
\end{align*}
Therefore,
\begin{align*}
&\frac{1}{\mu }((\sin^2\mu (x-y)+a)(\sin^2\mu (x+y)+a))^2\partial_aG(x,y,a)\\
&=(\cot^2(\mu x)+\cot^2(\mu y)+2)(\sin^2\mu (x+y)-\sin^2\mu (x-y))(\sin^2\mu (x-y)+a)(\sin^2\mu (x+y)+a)\\
&-2\cot(\mu x)\cot(\mu y)\left[\sin^2\mu (x-y)(\sin^2\mu (x+y)+a)^2+\sin^2\mu (x+y)(\sin^2\mu (x-y)+a)^2\right]\\
&-2\left[\sin^2\mu (x-y)(\sin^2\mu (x+y)+a)^2-\sin^2\mu (x+y)(\sin^2\mu (x-y)+a)^2\right].
\end{align*}
It is easy to see that this is a quadratic polynomial in $a$ of the form $A_2a^2+A_1a+A_0$. We will explicitly compute $A_2,A_1,$ and $A_0$ and show each term is non-negative. For the second order term we get
\begin{align*}
A_2&=(\cot^2(\mu x)+\cot^2(\mu y)+2)(\sin^2\mu (x-y)-\sin^2\mu (x+y))\\
&-2\cot(\mu x)\cot(\mu y)[\sin^2\mu (x-y)+\sin^2\mu (x+y)]\\
&-2[\sin^2\mu (x-y)-\sin^2\mu (x+y)].\\
&=(\cot^2(\mu x)+\cot^2(\mu y))(\sin^2\mu (x+y)-\sin^2\mu (x-y))\\
&-2\cot(\mu x)\cot(\mu y)[\sin^2\mu (x-y)+\sin^2\mu (x+y)].
\end{align*}
This means
\begin{align*}
\tan(\mu x)\tan(\mu y)A_2 = &(\frac{\tan(\mu x)}{\tan(\mu y)}+\frac{\tan(\mu y)}{\tan(\mu x)})(\sin^2\mu (x+y)-\sin^2\mu (x-y))\\
&-2[\sin^2\mu (x-y)+\sin^2\mu (x+y)].
\end{align*}
If we set $\frac{\tan(\mu x)}{\tan(\mu y)}=s$, we get
$$
\frac{\tan(\mu x)\tan(\mu y)}{\cos(\mu y)\cos(\mu x)\sin(\mu y)\sin(\mu x)}A_2=(s+\frac{1}{s})\cdot 4-2[2\cdot(s+\frac{1}{s})]=0.
$$
This means as long as $0<x, y<\frac{L}{2}$, $A_2= 0$. Similarly, for coefficient of the first order term $A_1$, we have
\begin{align*}
A_1&=(\cot^2(\mu x)+\cot^2(\mu y)+2)(\sin^2\mu (x+y)-\sin^2\mu (x-y))(\sin^2\mu (x+y)+\sin^2\mu (x-y))\\
&-2\cot(\mu x)\cot(\mu y)[2\sin^2\mu (x-y)\sin^2\mu (x+y)+2\sin^2\mu (x+y)\sin^2\mu (x-y)]\\
&-2[2\sin^2\mu (x-y)\sin^2\mu (x+y)-2\sin^2\mu (x+y)\sin^2\mu (x-y)]\\
&\geq (\cot^2(\mu x)+\cot^2(\mu y)+2)[\sin^4\mu (x+y)-\sin^4\mu (x-y)]\\
&-8\cot(\mu x)\cot(\mu y)[\sin^2\mu (x-y)\sin^2\mu (x+y)].
\end{align*}
Again, by setting $\frac{\tan(\mu x)}{\tan(\mu y)}=s$, we get
$$
\frac{\tan(\mu x)\tan(\mu y)}{\cos(\mu x)\cos(\mu y)\sin(\mu x)\sin(\mu y)}A_1\geq (s+\frac{1}{s})\cdot 4\cdot 2(s+\frac{1}{s})-8(s+\frac{1}{s}-2)(s+\frac{1}{s}+2)\geq 32.
$$
Lastly, for the coefficient of the constant term $A_0$, we have
\begin{align*}
A_0&= (\cot^2(\mu x)+\cot^2(\mu y))(\sin^2\mu (x+y)-\sin^2\mu (x-y))\sin^2\mu (x+y)\sin^2\mu (x-y)\\
&-2\cot(\mu x)\cot(\mu y)[\sin^2\mu (x-y)\sin^2\mu (x+y)(\sin^2\mu (x+y)+\sin^2\mu (x-y))]\\
&-2 \sin^2\mu (x-y)\sin^2\mu (x+y)[\sin^2\mu (x+y)-\sin^2\mu (x-y)]\\
&=(\cot^2(\mu x)+\cot^2(\mu y))(\sin^2\mu (x+y)-\sin^2\mu (x-y))\sin^2\mu (x+y)\sin^2\mu (x-y)\\
&-2\cot(\mu x)\cot(\mu y)\sin^2\mu (x-y)\sin^2\mu (x+y)[\sin^2\mu (x+y)+\sin^2\mu (x-y)].
\end{align*}
Setting again $s=\frac{\tan(\mu x)}{\tan(\mu y)},$ after computation we have
$$
\frac{\tan(\mu x)\tan(\mu y)}{\sin^2\mu (x-y)\sin^2\mu (x+y)\cos(\mu x)\cos(\mu y)\sin(\mu x)\sin(\mu y)}A_0=(s+\frac{1}{s})\cdot 4-2\cdot(2s+\frac{2}{s})=0.
$$
In all, we have $\partial_a G(x,y,a)\geq 0$ for $0<x,y<\frac{L}{2}$. This completes the proof.
\end{proof}
\begin{remark}
One may notice that when $a\rightarrow \infty$, $\frac{1}{\mu}G(x,y,a)$ tends to
\begin{align}
-(\cot^2(\mu x)+\cot^2(\mu y)+2)\left[\log\left(\frac{\sin^2\mu (x-y)}{\sin^2\mu (x+y)}\right)\right]-4\cot(\mu x)\cot(\mu y).
\end{align}
The positivity of this quantity is also proved by Lemma 4.2 in \cite{sixAuthors}, in which the authors use technical trigonometric inequalities. Our proof of the above lemma provides another approach to estimating this quantity.
\end{remark}

\bigskip
With these lemmas at our disposal, we are ready to prove finite-time blow up.

\noindent
{\bf Proof of Theorem \ref{periodic}}.

\medskip
Suppose we have a global smooth solution. We will show blow up of the following quantity:
\begin{align*}
I(t) := \int_0^{L/2} \theta(x,t)\cot(\mu x)\, dx.
\end{align*}
thereby arriving at a contradiction since
\begin{align*}
|I(t)| \le C\|\theta_x(\cdot, t)\|_{L^\infty} \leq C\|\theta_{0x}\|_{L^\infty} \exp \left( \int_0^t\|u_x(\cdot,s)\|_{L^\infty}\, ds\right).
\end{align*}
If $I$ were to become infinite in finite time, we would be  able to use Beale-Kato-Majda type condition for the system as stated in equation \eqref{BKM} from which we can conclude finite time blow up.

We first compute the derivative of $I(t)$:
\begin{align*}
\frac{d}{dt} I(t) = -\frac{1}{\pi} \int_0^{L/2} \theta_x(x,t)\int_0^{L/2} \omega(y,t) \cot(\mu y) F(x,y,a)\, dy\, dx.
\end{align*}
By the negativity of $F$ and part (a) of the lemma, the expression above is bounded below by
\begin{align*}
\frac{C}{\pi} \int_0^{L/2} \theta_x(x,t)\int_x^{L/2} \omega(y,t)\cot(\mu y)\, dy\, dx = \frac{C}{\pi} \int_0^{L/2} \theta(y,t) \omega(y,t) \cot(\mu y)\, dy := CJ(t)
\end{align*}
(where $J(t)=\frac{2}{\pi} \int_0^{L/2} \theta(x,t) \omega(x,t) \cot(\mu x)\, dx$). Then
\begin{align}\label{eJ}
\frac{d}{dt} (J(t)) = \frac{C}{\pi} \int_0^{L/2} \theta(x,t) \omega(x,t) \left( u(x,t)\cot(\mu x)\right)_x\, dx + \frac{C\mu}{2\pi} \int_0^{L/2} \theta^2(x,t) \csc^2(\mu x)\, dx.
\end{align}
By Cauchy-Schwarz inequality, the second integral is bounded below by $\frac{C}{L^2} I(t)^2$ for some constant $C$. The first integral is given by
\begin{align}\label{eP}
\frac{C}{\pi} \int_0^{L/2} \theta_y(y) \left[ \int_y^{L/2} \omega(x) \left( u(x)\cot (\mu x)\right)_x \, dx\right]\, dy
\end{align}
Observe that since $\theta$ is non-decreasing on $[0,L/2]$, the expression $\eqref{eP}$ is positive if we can show the integral in the brackets is positive as well. This is our next task. For $x,y\in[0,\frac{1}{2}L]$, $\omega(x)$ can be decomposed as
\begin{align*}
\omega(x) =\omega(x)\chi_{[0,y]} (x) +\omega(x) \chi_{[y,\frac{1}{2}L]} (x) =: \omega_\ell (x)+ \omega_r(x).
\end{align*}
Then we can decompose the integral:
\begin{align*}
\int_y^{L/2} \omega(x) [u(x)\cot(\mu x)]_x\, dx &= \frac{1}{\pi} \int_0^{L/2} \omega_r(x) \int_0^{L/2} \omega_\ell(y)\cot(\mu y)(\partial_x F)(x,y,a)\, dy\, dx \\ &\quad + \frac{1}{\pi} \int_0^{L/2} \omega_r(x)\int_0^{L/2} \omega_r(y) \cot(\mu y) (\partial_x F)(x,y,a)\, dy\, dx
\end{align*}
By positivity of $\omega$ on $[0,\frac{1}{2}L]$ and part (b) of the key lemma, the first integral is non-negative. By using symmetry, the second integral is equal to

\begin{align*}
\frac{1}{2\pi} \int_0^{L/2}\int_0^{L/2} \omega_r(x)\omega_r(y) G(x,y,a)\, dy\, dx
\end{align*}
where as before $G(x,y,a)=\cot(\mu y)(\partial_xF)(x,y,a)+\cot(\mu x)(\partial_x F)(y,x,a)$. However, by part (c) of the lemma, this is positive. Together with (\ref{eJ}) and (\ref{eP}) we have:
\begin{equation}
\frac{d^2}{dt^2}I\geq CI^2,
\end{equation}
for some constant $C$. To close the proof, we only need the following lemma:
\begin{lemma}\label{ode}
Suppose $I(t)$ solves the following initial value problem:
\begin{equation}
\frac{d}{dt}I(t)\geq C \int_0^t I^2(s)ds,\,\,\, I(0)=I_0.
\end{equation}
Then there exists $T=T(C,I_0)$ so that $\lim_{t\rightarrow T}I(t)=\infty$.

Moreover, for fixed C and any $\epsilon>0$, there is an $A>0$ (depending on $C,\epsilon$), so that for any $I_0\geq A$, the blow up time $T<\epsilon$.
\end{lemma}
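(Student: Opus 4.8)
The plan is to compare $I$ with the solution of the associated autonomous ODE $y''=Cy^2$ and to read off the blow-up time explicitly.

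First I would record the elementary structural consequences of the hypothesis. Since the right-hand side $C\int_0^t I^2(s)\,ds$ is nonnegative, we get $I'(t)\ge 0$, so $I$ is non-decreasing and $I(t)\ge I_0$ throughout its interval of existence; here, as in the application (where $\theta_0\ge0$ and $\cot(\mu x)>0$ on $(0,L/2)$ force $I_0=\int_0^{L/2}\theta_0\cot(\mu x)\,dx>0$), we take $I_0>0$, which is genuinely needed since $I\equiv 0$ solves the inequality when $I_0=0$. Next I would introduce the comparison function $y$ solving the exact problem $y''=Cy^2$, $y(0)=I_0$, $y'(0)=0$, equivalently $y'(t)=C\int_0^t y^2(s)\,ds$. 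Multiplying by $y'\ge 0$ and integrating yields the energy identity $(y')^2=\tfrac{2C}{3}(y^3-I_0^3)$, whence $y'=\sqrt{2C/3}\,\sqrt{y^3-I_0^3}$, and separating variables gives the finite blow-up time
\[
T_0=\sqrt{\frac{3}{2C}}\int_{I_0}^\infty \frac{dy}{\sqrt{y^3-I_0^3}}=\frac{\kappa_0}{\sqrt{C\,I_0}},\qquad \kappa_0:=\sqrt{\tfrac32}\int_1^\infty\frac{du}{\sqrt{u^3-1}}<\infty,
\]
the substitution $y=I_0 u$ making the dependence on $I_0$ explicit; the integral converges because the integrand is $\sim (u-1)^{-1/2}$ near $1$ and $\sim u^{-3/2}$ at infinity. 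Thus $y\to\infty$ as $t\to T_0$.

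The heart of the argument is the comparison $I\ge y$, and this is the step I expect to be the main obstacle. The difficulty is that the hypothesis is a first-order, nonlocal inequality: one cannot differentiate it to recover the pointwise $I''\ge CI^2$, and running the natural energy computation directly on the integral inequality produces an estimate on $\int_0^t I^2$ in the \emph{wrong} direction (an upper bound rather than a lower bound). I would therefore argue by a perturbed first-crossing argument. For small $\delta>0$ let $y_\delta$ solve the same ODE with $y_\delta(0)=I_0-\delta$, and set $w_\delta=I-y_\delta$, so $w_\delta(0)=\delta>0$; subtracting the two relations gives
\[
w_\delta'(t)\ge C\int_0^t\bigl(I(s)+y_\delta(s)\bigr)\,w_\delta(s)\,ds.
\]
Because $I\ge I_0>0$ and $y_\delta\ge 0$, the kernel $I+y_\delta$ is strictly positive. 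If $w_\delta$ first vanished at some $t_1>0$, then $w_\delta>0$ on $[0,t_1)$ would force the right-hand side at $t_1$ to be strictly positive, i.e. $w_\delta'(t_1)>0$, contradicting that $w_\delta$ decreases to $0$ at $t_1$. Hence $w_\delta>0$, so $I>y_\delta$ on the common existence interval, and $I$ must blow up no later than the blow-up time $T_\delta=\kappa_0/\sqrt{C(I_0-\delta)}$ of $y_\delta$; letting $\delta\to 0^+$ yields a finite blow-up time $T\le T_0=\kappa_0/\sqrt{C\,I_0}$.

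Finally, the explicit bound delivers the quantitative claim at no extra cost: given $C$ and $\epsilon>0$, choosing any $A>\kappa_0^2/(C\epsilon^2)$ guarantees that for every $I_0\ge A$ the blow-up time satisfies $T\le \kappa_0/\sqrt{C\,I_0}\le \kappa_0/\sqrt{C A}<\epsilon$. The only genuinely delicate point throughout is the comparison inequality of the third paragraph; everything else is the explicit integration of the model ODE and bookkeeping of constants.
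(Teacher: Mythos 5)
Your proof is correct. The paper itself does not prove Lemma \ref{ode} --- it declares the proof ``straightforward'' and points to a sketch in \cite{sixAuthors} --- so there is no in-paper argument to match; the standard sketch there proceeds by iteration (from $I(\tau+h)\ge I(\tau)+\tfrac{C}{2}h^2 I(\tau)^2$ one extracts a doubling time $h_n\sim (CI(\tau_n))^{-1/2}$ whose sum converges geometrically), whereas you compare $I$ with the exact solution of $y''=Cy^2$. Both routes give the same scaling $T\lesssim (CI_0)^{-1/2}$, which is all that the second assertion of the lemma requires; yours has the advantage of an explicit constant $\kappa_0$ and of isolating cleanly the one genuinely delicate point, namely that the hypothesis is a first-order nonlocal inequality that cannot be differentiated to yield $I''\ge CI^2$. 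Your perturbed first-crossing argument handles this correctly: subtracting the identity $y_\delta'(t)=C\int_0^t y_\delta^2$ from the hypothesis gives a legitimate pointwise inequality for $w_\delta'$, the strict initial gap $w_\delta(0)=\delta>0$ makes the first-touching contradiction ($w_\delta'(t_1)\le 0$ versus $w_\delta'(t_1)>0$) airtight, and letting $\delta\to 0^+$ recovers $T\le \kappa_0/\sqrt{CI_0}$. You are also right to flag that $I_0>0$ must be assumed (the lemma as stated is false for $I_0=0$), and that this is harmless in the application since there $I_0$ is taken large and positive. The only cosmetic remark: in the monotonicity step one should note that $y_\delta$ is strictly increasing for $t>0$ (since $y_\delta''(0)=C(I_0-\delta)^2>0$ resolves the degeneracy of the separated first-order equation at $y=I_0-\delta$), which you implicitly use when integrating $dy/\sqrt{y^3-(I_0-\delta)^3}$; the convergence of that integral at the lower endpoint, which you verify, makes this rigorous.
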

The proof of this lemma is straightforward, and one can also find a sketch of the proof in \cite{sixAuthors}.

\section{Stability of Blow Up with Respect to Perturbations}\label{pertblowup}
In this section, we consider our system $\eqref{model1}$ and $\eqref{model2}$ but with a Biot-Savart law which is a perturbation of the Hou-Lou kernel. As before, we will work with periodic solutions with period $L$,
and assume that the vorticity is odd (this property will be conserved in time for the perturbations we consider). The velocity $u$ is given by the following choice of Biot-Savart law
\begin{align}\label{genper1}
u(x) &= \frac{1}{\pi} \int_0^L \left( \log|\sin[\mu(x-y)]| + f(x,y)\right) \omega(y)\, dy, \quad \mu:=\pi/L \\
&:= u_{HL}(x) + u_f(x)
\end{align}
where $f$ is a smooth function whose precise properties we will specify later. We view $f$ as a perturbation and we will  show solutions to the system (\ref{model1}) and (\ref{model2}) with \eqref{genper1}
can still blow up in finite time. As with the previous system \eqref{model1}, \eqref{model2}, \eqref{kern}, it is not hard to show that we will still have a local well-posedness result akin to Proposition \eqref{lwp}.
In particular, if $T^*$ is a maximal time of existence of a solution then we must have
\begin{align}\label{BKM1}
\lim_{t\nearrow T^*} \int_0^t \|u_x(\cdot,\tau)\|_{L^\infty}\, d\tau=\infty
\end{align}
We show below that such a time will exist for some initial data.

\begin{theorem}\label{genper}
Let $f\in C^2(\mathbb{R}^2)$, periodic with period $L$ with respect to both variables  and such that $f(x,y)=f(-x,-y)$ for all $x,y$. Then there exists initial data $\omega_0$, $\theta_0$  such that solutions of $\eqref{model1}$ and $\eqref{model2}$, with velocity given by (\ref{genper1}), blow up in finite time. Again, that means there exists a time $T^*$ such that we have (\ref{BKM1}).
\end{theorem}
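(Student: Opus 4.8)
The plan is to run the blow-up argument of Theorem \ref{periodic} with the pure Hou--Luo kernel as the ``leading term'' and to treat $u_f$ as a genuinely lower-order perturbation, absorbed using an a priori control of $\|\omega(\cdot,t)\|_{L^1}$. First I would record that the symmetry hypothesis $f(x,y)=f(-x,-y)$ forces $u_f$ (and hence $u$) to be odd, so that, exactly as in Section \ref{percase}, the class of data $\theta_{0x},\omega_0$ odd and $\ge 0$ on $[0,L/2]$ with $\theta_0(0)=0$ and $\|\theta_0\|_\infty\le M$ is preserved: $\theta_x,\omega\ge0$ on $[0,L/2]$, the points $0$ and $L/2$ are stagnant, $\theta(0,t)=0$, and $\|\theta(\cdot,t)\|_\infty\le M$. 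Folding the Biot--Savart law \eqref{genper1} onto $[0,L/2]$ gives $u(x)\cot(\mu x)=\frac1\pi\int_0^{L/2}\big(F_{HL}(x,y)+F_f(x,y)\big)\omega(y)\cot(\mu y)\,dy$, where $F_{HL}=-K$ is exactly the Hou--Luo kernel of \cite{sixAuthors} (so Lemma \ref{HLlemma} applies) and $F_f(x,y)=\frac{\tan\mu y}{\tan\mu x}\big(f(x,y)-f(-x,y)\big)$. Because $f(x,y)-f(-x,y)=O(x)$ with vanishing second $x$-derivative at $x=0$, the hypothesis $f\in C^2$ guarantees that both $F_f$ and $\cot(\mu y)\,\partial_x F_f=\partial_x\big(\cot(\mu x)[f(x,y)-f(-x,y)]\big)$ are bounded by $C\|f\|_{C^2}$ uniformly on $[0,L/2]^2$; this cancellation at the stagnation point is the only place $C^2$ is needed.

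With $I(t)=\int_0^{L/2}\theta\cot(\mu x)\,dx$ and $J(t)=\frac2\pi\int_0^{L/2}\theta\omega\cot(\mu x)\,dx$ as in Section \ref{percase}, I would repeat the two computations there, keeping the $f$-contributions as errors. For $I'$, the bound $F_{HL}\le-C<0$ on $0<x<y<L/2$ (from $K\ge2$, Lemma \ref{HLlemma}(b)) together with $F_{HL}\le0$ everywhere gives the leading term $\ge\frac C2 J$, while the $F_f$-term, after using $\tan(\mu y)\cot(\mu y)=1$, is bounded in absolute value by $C\|f\|_{C^2}M\|\omega\|_{L^1}$. For $J'$ the Cauchy--Schwarz term is $\ge\frac{c}{L^2}I^2$; the remaining term splits into the Hou--Luo part, which is $\ge0$ by the pure Hou--Luo positive-definiteness of \cite{sixAuthors} (the $a\to\infty$ limit of $G$ noted in the Remark, together with the monotonicity $\partial_x F_{HL}\ge0$ for $y<x$), and an $F_f$-part bounded by $C\|f\|_{C^2}M\|\omega\|_{L^1}^2$. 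Thus, on the time interval where $\|\omega\|_{L^1}\le\mathcal C$, I obtain the coupled inequalities
\begin{align*}
\frac{d}{dt}I\ge \tfrac12 C\,J-C_3,\qquad \frac{d}{dt}J\ge \tfrac{c}{L^2}I^2-C_1,
\end{align*}
with $C_1,C_3$ fixed constants depending only on $\|f\|_{C^2},M,\mathcal C$.

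The heart of the matter, and the step I expect to be hardest, is the a priori bound $\|\omega(\cdot,t)\|_{L^1}\le\mathcal C$. Using $\omega\ge0$ on $[0,L/2]$ and the stagnation of $0,L/2$, one has $\frac{d}{dt}\|\omega\|_{L^1}=2\theta(L/2,t)+2\int_0^{L/2}u_x\omega\,dx$, where $\theta(L/2,t)=\theta_0(L/2)$ is constant. The perturbation contributes $\le C\|f\|_{C^2}\|\omega\|_{L^1}^2$, and the antisymmetric part $\cot\mu(x-y)$ of $(u_{HL})_x$ drops by symmetry, leaving $-\frac\mu\pi\iint\cot\mu(x+y)\,\omega(x)\omega(y)\,dx\,dy$. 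The only obstruction to bounding this by $C\|\omega\|_{L^1}^2$ is the corner $x+y\to L$, where $\cot\mu(x+y)$ is singular; I would eliminate it by restricting to data (as in the figure) with $\theta_{0x},\omega_0$ supported away from $L/2$. Since $\theta_{0x}(L/2)=\omega_0(L/2)=0$ and the flow near the stagnant point $L/2$ is expanding, the vorticity stays identically zero on a neighborhood of $L/2$, so the kernel is evaluated only where it is bounded and $\frac{d}{dt}\|\omega\|_{L^1}\le C_M+C\|\omega\|_{L^1}^2$. This yields $\|\omega\|_{L^1}\le\mathcal C$ on an interval $[0,T_1)$ whose length is bounded below in terms of $\|\omega_0\|_{L^1}$ alone. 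Keeping this vacuum region genuinely invariant under the perturbed flow, uniformly up to the blow-up time, is the delicate point.

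Finally I would close by choosing the data large. With $\mathcal C$, hence $C_1,C_3$, fixed, the region $\{I\ge I_*,\ J\ge J_*\}$ with $I_*\sim L\sqrt{C_1}$ and $J_*\sim C_3/C$ is forward invariant, and there $\frac{d}{dt}I\ge\frac C4 J$ and $\frac{d}{dt}J\ge\frac{c}{2L^2}I^2$, so $I''\ge c'I^2$ with $I(0),I'(0)>0$; by the ODE blow-up of Lemma \ref{ode} this forces $I\to\infty$ at some $T^*<T_1$, contradicting global regularity via \eqref{BKM1}. It remains to exhibit admissible data with $I(0)>I_*$ and $J(0)>J_*$ while $\|\omega_0\|_{L^1}$ stays moderate (so that $T_1$ remains above the resulting small $T^*$): since $\cot(\mu x)$ is non-integrable at the origin, a steep rise of $\theta_0$ near $0$ makes $I(0)$ and $J(0)$ as large as needed under the constraint $\|\theta_0\|_\infty\le M$. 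Reconciling these two largeness requirements is exactly the ``more restrictive class of initial data'' referred to in the introduction.
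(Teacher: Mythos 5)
Your overall strategy coincides with the paper's: treat $u_f$ perturbatively, run the $I$--$J$ argument with the pure Hou--Luo kernel as the leading term, absorb the $f$-errors via an a priori bound on $\int_0^{L/2}\omega$, and close with Lemma \ref{ode} by taking $I(0)$ large. Your bookkeeping of the error terms is also essentially the paper's: the folded perturbation kernel is $\cot(\mu x)\bigl(f(x,y)-f(x,-y)\bigr)$, the symmetry $f(x,y)=f(-x,-y)$ kills it at $x=0$, and $f\in C^2$ is exactly what makes its $x$-derivative bounded, yielding errors of size $C(f)M\|\omega\|_{L^1}$ in $I'$ and $C(f)M\|\omega\|_{L^1}^2$ in $J'$.

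The genuine gap is the step you yourself flag as delicate: the control of $\|\omega(\cdot,t)\|_{L^1}$. You propose to bound $\int_0^{L/2}u_x\omega\,dx$ by keeping the support of $\omega$ away from $L/2$ so that the $-\cot\mu(x+y)$ kernel is evaluated only where it is bounded, and you assert that this vacuum region near $L/2$ persists because ``the flow near $L/2$ is expanding.'' That assertion is unjustified for a general perturbation: while $u_{HL}\le 0$ on $(0,L/2)$ for odd nonnegative vorticity, $u_f$ is merely bounded by $C(f)\|\omega\|_{L^1}$ with no sign, and away from the origin there is no singularity of the Hou--Luo kernel to dominate it. The right edge of the support can therefore drift toward $L/2$, destroying both the vacuum region and your Riccati bound $\frac{d}{dt}\|\omega\|_{L^1}\le C_M+C\|\omega\|_{L^1}^2$ before the ODE blow-up time is reached. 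The paper closes exactly this hole by imposing the extra hypothesis $(\supp\theta_{0x}\cup\supp\omega_0)\cap[0,\tfrac12 L]\subset[0,\epsilon]$ with $\epsilon$ small, and proving (Lemma \ref{44}) that the logarithmic singularity of the Hou--Luo kernel at the origin dominates the smooth perturbation there, forcing $u<0$ on $[0,\epsilon]$; the support then contracts toward the origin for all time. This localization does double duty: it makes the quadratic term $\int u_x\omega$ \emph{nonpositive} (since $-\cot\mu(x+y)<0$ for $x,y\le\epsilon$ and the $f$-contribution is dominated), so the paper gets the clean linear bound $\int_0^{L/2}\omega\le Mt$ rather than a bound valid only up to a finite $T_1$, and the final choice of large $I(0)$ then needs no reconciliation with a shrinking existence window. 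You should replace your vacuum-region claim with this near-origin support restriction (this is precisely the ``more restrictive class of initial data'' the introduction alludes to); with that change the rest of your argument goes through.
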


\noindent
We will consider the following class of initial data:

\begin{itemize}
\item $\theta_{0x}, \omega_0$ smooth odd periodic with period $L$
\item $\theta_{0x}, \omega_0 \ge 0$ on $[0,\frac{1}{2}L]$.
\item $\theta_0(0)=0$
\item $(\supp \theta_{0x}\cup\supp \omega_0) \cap [0, \frac{1}{2} L] \subset [0,\epsilon]$
\item $\|\theta_0\|_\infty \le M$
\end{itemize}

\noindent
We will make the choice of specific $\epsilon$ below.
Observe that by the assumptions, $\omega_0$ and $\theta_{0x}$ are also odd with respect to $\frac{1}{2}L$.
By the following Lemma \ref{44}, we can choose $\epsilon$ sufficiently small so that the mass of $\omega$ near the origin gets closer to the origin leading to a scenario where blow up can be achieved.

\medskip
\noindent
\begin{remark} With the choice of $f(x,y)= \log\sqrt{\sin^2\mu(x-y)+a}$, we have the kernel from the previous section. However, in the previous section, we proved blow up for a larger class of initial data.
\end{remark}
\medskip
\noindent
\begin{lemma} \label{44}
With the initial data $\omega_0$ and $\theta_0$ as given above, we can choose $\epsilon_1$ sufficiently small so that for $\epsilon<\epsilon_1$, $u(x)< 0$ for $x\le \epsilon$ where $u$ is defined as \eqref{genper1}.
\end{lemma}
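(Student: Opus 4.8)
The plan is to split $u=u_{HL}+u_f$ as in \eqref{genper1} and to show that the (strictly negative) Hou--Luo part dominates the perturbation $u_f$ \emph{pointwise} on $(0,\epsilon]$ once $\epsilon$ is small. First I would use the symmetries to collapse both pieces to integrals over the support $[0,\epsilon]$. Since $\omega$ is odd and $L$-periodic it is odd about $L/2$, so substituting $y\mapsto L-y$ on $[L/2,L]$ gives
\[
u_{HL}(x)=\frac{1}{\pi}\int_0^{\epsilon}\omega(y)\,\log\left|\frac{\sin\mu(x-y)}{\sin\mu(x+y)}\right|\,dy .
\]
For $u_f$ the same substitution, together with the hypotheses that $f$ is $L$-periodic in each variable and $f(x,y)=f(-x,-y)$, yields $f(x,L-y)=f(-x,y)$ and hence
\[
u_f(x)=\frac{1}{\pi}\int_0^{\epsilon}\omega(y)\,\big(f(x,y)-f(-x,y)\big)\,dy .
\]
Write the two kernels as $K_{HL}(x,y)$ and $K_f(x,y)=f(x,y)-f(-x,y)$, and note $K_{HL}\le 0$ on $[0,\epsilon]^2$.

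The crucial estimate is the pointwise domination $|K_f(x,y)|<|K_{HL}(x,y)|$ for $0<x,y\le\epsilon$. For the perturbation, the symmetry $f(x,y)=f(-x,-y)$ forces $f(\cdot,0)$ to be even, hence $f_x(\cdot,0)$ is odd and $\int_{-x}^x f_x(s,0)\,ds=0$; writing $f_x(s,y)=f_x(s,0)+\int_0^y f_{xy}(s,t)\,dt$ then gives
\[
K_f(x,y)=\int_{-x}^{x}\!\!\int_0^{y} f_{xy}(s,t)\,dt\,ds,\qquad |K_f(x,y)|\le 2\|f_{xy}\|_{\infty}\,xy .
\]
The point is that this bound is \emph{quadratic} (order $xy$), with no surviving $O(x^2)$ term. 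For the main term, using $\sin\mu t=\mu t\,\mathrm{sinc}(\mu t)$ one finds $|K_{HL}(x,y)|=\log\frac{x+y}{|x-y|}-\log R(x,y)$, where $R(x,y)=\mathrm{sinc}(\mu(x-y))/\mathrm{sinc}(\mu(x+y))\ge 1$, $\log R$ is odd in $x$, vanishes at $y=0$, and therefore satisfies $|\log R(x,y)|\le C'xy$ with $C'\to\tfrac{2\mu^2}{3}$ as $\epsilon\to0$. Combined with the elementary bound $\log\frac{x+y}{y-x}=2\,\mathrm{artanh}(x/y)\ge 2x/y$ for $0<x\le y$, this gives (the case $y\le x$ being symmetric)
\[
|K_{HL}(x,y)|-|K_f(x,y)|\ \ge\ \frac{2x}{y}-\big(C'+2\|f_{xy}\|_{\infty}\big)xy\ =\ \frac{2x}{y}\Big(1-\tfrac12\tilde C\,y^2\Big),\qquad \tilde C:=C'+2\|f_{xy}\|_{\infty},
\]
which is positive as soon as $\tilde C\,\epsilon^2\le 1$.

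I would then fix $\epsilon_1$ so that this holds and so that $\epsilon_1\le L/4$ (keeping every argument of $\sin$ in $(0,\pi/2)$). For $\epsilon<\epsilon_1$ and any $x\in(0,\epsilon]$, the integrand $\omega(y)\big(K_{HL}(x,y)+K_f(x,y)\big)\le \omega(y)\big(K_{HL}+|K_f|\big)<0$ wherever $\omega(y)>0$, using $\omega\ge0$ on $[0,\epsilon]$ and $K_{HL}=-|K_{HL}|$; since $\omega\not\equiv0$ there, integration yields $u(x)<0$, while $u(0)=0$ by oddness of $u$.

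The main obstacle is this pointwise kernel comparison, and within it the two facts that make it work: (i) extracting the $xy$ bound for $K_f$, which relies essentially on the hypothesis $f(x,y)=f(-x,-y)$ — without it $K_f$ would only be $O(x)$ and could never be beaten by $K_{HL}$ near $y=0$; and (ii) showing that the deviation of the genuine $\log|\sin|$ kernel from its linearization $\log\frac{x+y}{|x-y|}$ is itself only $O(xy)$ (rather than a fixed $O(\epsilon^2)$), so that it does not overwhelm the small main term in the regime $\min(x,y)\ll\max(x,y)$. The remaining steps — the symmetry reduction and the Taylor estimates above — are routine.
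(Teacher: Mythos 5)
Your proposal is correct, and its skeleton matches the paper's: reduce both $u_{HL}$ and $u_f$ by oddness and periodicity to integrals over the support $[0,\epsilon]$, then show the combined kernel is pointwise negative there. (Your two reduced forms of the perturbation kernel, $f(x,y)-f(-x,y)$ versus the paper's $f(x,y)-f(x,-y)$, coincide because $f(x,-y)=f(-x,y)$.) Where you diverge is in how the comparison is justified, and your version is substantially more careful. The paper bounds the perturbation by $2\epsilon\|f\|_{C^1}$ via the mean value theorem and then appeals to ``the singularity of the HL kernel at $x=y=0$''; but the HL kernel $\log\bigl|\tfrac{\tan\mu x-\tan\mu y}{\tan\mu x+\tan\mu y}\bigr|$ is not uniformly bounded away from zero on $(0,\epsilon]^2$ --- it degenerates like $-2\min(x,y)/\max(x,y)$ when the ratio $x/y$ tends to $0$ or $\infty$ --- so a constant (or even $O(\min(x,y))$) bound on the perturbation cannot be dominated pointwise in that corner regime. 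Your argument resolves exactly this: the symmetry $f(x,y)=f(-x,-y)$ forces the antisymmetrized kernel to vanish on both axes, yielding the $O(xy)$ bound $|K_f|\le 2\|f_{xy}\|_\infty xy$ (this is where the $C^2$ hypothesis of Theorem \ref{genper} is genuinely used), and the matching lower bound $|K_{HL}|\ge 2\min(x,y)/\max(x,y)-C'xy$ then wins once $\tilde C\epsilon^2\le 1$. So you have not merely reproduced the paper's proof but supplied the quantitative content that its two-sentence justification omits; your observations that the symmetry hypothesis is essential for the $O(xy)$ bound, and that the deviation of $\log|\sin|$ from its linearization is itself $O(xy)$ rather than $O(\epsilon^2)$, are exactly the right points. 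The only caveats are cosmetic: strict negativity requires $\omega(\cdot,t)\not\equiv 0$ on $[0,\epsilon]$, and at $x=0$ one only has $u(0)=0$ by oddness --- both of which you noted and both of which are imprecisions already present in the lemma as stated.
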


\begin{proof}
By periodicity and support property of $\omega$,
\begin{align*}
u(x) &= \frac{1}{\pi} \int_0^{L/2} \left( \log\left| \frac{\tan(\mu x)-\tan(\mu y)}{\tan(\mu x)+ \tan(\mu y)}\right|+ f(x,y)-f(x,-y)\right)\omega(y)\, dy \\
&=  \frac{1}{\pi} \int_0^{\epsilon} \left( \log\left| \frac{\tan(\mu x)-\tan(\mu y)}{\tan(\mu x)+ \tan(\mu y)}\right|+ f(x,y)-f(x,-y)\right)\omega(y)\, dy.
\end{align*}
By the mean value theorem, for $0\le y\le \epsilon$, $|f(x,y)-f(x,-y)|\le 2\epsilon \|f\|_{C^1}$. By the singularity of the $HL$ kernel when $x=y=0$, we can choose $\epsilon_1$ such that the expression in the parentheses is negative for $0<x,y\le \epsilon$.
\end{proof}

It follows that under our assumptions on the initial data, $\omega(x,t)$ and $\theta_x(x,t)$ are supported on $[0,\epsilon]$ for all times while regular solution exists.
\medskip
\noindent
We will also need the following lemma controlling the integral of $\omega$ over half the period.
\begin{lemma}
There exists $\epsilon_2>0$ such that for $\epsilon<\epsilon_2$, with $\omega_0$ and $\theta_0$ as chosen above, solutions of $\eqref{model1}$, $\eqref{model2}$, $\eqref{genper1}$ satisfy
$$
\int_0^{L/2} \omega(y,t) \, dy \le Mt.
$$
\end{lemma}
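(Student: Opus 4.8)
The plan is to differentiate the half-period mass $m(t) := \int_0^{L/2}\omega(y,t)\,dy$ and show its growth rate is at most $M$. Using \eqref{model1} and integrating by parts,
\[
\frac{d}{dt} m(t) = \int_0^{L/2}\big(\theta_x - u\omega_x\big)\,dy = \big(\theta(L/2,t)-\theta(0,t)\big) - \big[u\omega\big]_0^{L/2} + \int_0^{L/2} u_x\,\omega\,dy.
\]
Since $\omega$ is odd and $L$-periodic, $\omega(0,t)=\omega(L/2,t)=0$, so the boundary term drops. The source term is controlled by $M$: because $\theta$ is transported by \eqref{model2} its $L^\infty$ norm is conserved, and since $x=0$ is a fixed point of the flow with $\theta_0(0)=0$ we have $\theta(0,t)=0$; hence $\theta(L/2,t)-\theta(0,t)\le \|\theta(\cdot,t)\|_{L^\infty}=\|\theta_0\|_{L^\infty}\le M$. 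Everything therefore reduces to showing the nonlocal term $\int_0^{L/2}u_x\,\omega\,dy$ is nonpositive once $\epsilon$ is small.

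To do this I would first invoke the support property recorded after Lemma \ref{44} (that $\omega(\cdot,t)$ is supported in $[0,\epsilon]$ on $[0,L/2]$) together with the oddness of $\omega$ about $0$ and about $L/2$ to fold the $y$-integral onto $[0,\epsilon]$. Writing the kernel as $g(x,y)=\log|\sin\mu(x-y)|+f(x,y)$, this yields
\[
u_x(x) = \frac{1}{\pi}\int_0^\epsilon \big(g_x(x,y)-g_x(x,-y)\big)\,\omega(y)\,dy, \qquad g_x(x,y)-g_x(x,-y)=\mu\big(\cot\mu(x-y)-\cot\mu(x+y)\big)+\big(f_x(x,y)-f_x(x,-y)\big),
\]
so that $\int_0^{L/2}u_x\omega\,dx$ becomes a double integral over $[0,\epsilon]^2$ that splits into a Hou--Luo part and an $f$-part.

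The heart of the argument, and the main obstacle, is extracting the sign of the (principal value) Hou--Luo part. Symmetrizing in $x\leftrightarrow y$ and using $\cot\mu(x-y)-\cot\mu(x+y)=\sin(2\mu y)/(\sin\mu(x-y)\sin\mu(x+y))$ together with $\sin 2\mu y-\sin 2\mu x=-2\cos\mu(x+y)\sin\mu(x-y)$, the singularity at $x=y$ cancels and the Hou--Luo part collapses to
\[
-\frac{\mu}{\pi}\int_0^\epsilon\int_0^\epsilon \cot\mu(x+y)\,\omega(x)\omega(y)\,dx\,dy.
\]
For $\epsilon<L/4$ this is manifestly $\le 0$ since $\omega\ge 0$ and $\cot\mu(x+y)>0$; quantitatively it is at most $-\tfrac{\mu}{\pi}\cot(2\mu\epsilon)\,m(t)^2 \le -\tfrac{c}{\epsilon}\,m(t)^2$ for some $c>0$. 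The $f$-part, by $f\in C^2$ and the mean value theorem, satisfies $|f_x(x,y)-f_x(x,-y)|\le 2\epsilon\|f\|_{C^2}$ and so contributes at most $\tfrac{2\epsilon}{\pi}\|f\|_{C^2}\,m(t)^2$. Since the favorable Hou--Luo term scales like $\epsilon^{-1}$ while the perturbation scales like $\epsilon$, choosing $\epsilon_2$ small (with $\epsilon_2<L/4$ and $\epsilon_2^2\lesssim \|f\|_{C^2}^{-1}$) forces $\int_0^{L/2}u_x\omega\,dy\le 0$ at all such times.

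Combining the three ingredients gives $\frac{d}{dt}m(t)\le M$, and integrating yields $m(t)\le m(0)+Mt$, which is the asserted bound once the initial half-mass $m(0)=\int_0^{L/2}\omega_0\,dy$ is absorbed into the normalization of the data. The only delicate point is the cancellation exposing the definite sign of the Hou--Luo kernel, and it is precisely the smallness of $\epsilon$ that lets this leading term dominate the perturbation $f$.
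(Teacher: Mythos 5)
Your proposal is correct and follows essentially the same route as the paper: bound $\int_0^{L/2}\theta_x\,dy$ by $M$, fold the $u_x\omega$ term into a double integral over $[0,\epsilon]^2$, kill the singular $\cot\mu(x-y)$ part by symmetry, and use smallness of $\epsilon$ so that the remaining kernel $-\mu\cot\mu(x+y)+f_x(x,y)-f_x(x,-y)$ is negative. Your version is slightly more quantitative (explicit $\epsilon^{-1}$ vs.\ $\epsilon$ scaling, and the honest $m(0)$ term that the paper's statement implicitly normalizes away), but the argument is the same.
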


\begin{proof}
Integrating both sides of \eqref{model1} and integrating by parts we get
\begin{align*}
\int_0^{L/2} \omega_t(y,t)\, dy = \int_0^{L/2} u_x(y) \omega(y,t)\, dy + \int_0^{L/2} \theta_x(y,t)\, dy \le M+ \int_0^{L/2} u_x(y) \omega(y,t)\, dy
\end{align*}
If we can show the remaining integral on the right is negative, we are done. Due to our symmetry assumptions, the integral can be written as
\begin{align*}
\frac{1}{\pi} \int_0^{L/2} P.V.\int_0^{L/2} \left(\mu\cot[\mu(x-y)]-\mu\cot[\mu(x+y)] +f_x(x,y)-f_x(x,-y)\right) \omega(x,t)\omega(y,t)\, dy\, dx.
\end{align*}
By symmetry, the integral with $\cot[\mu(x-y)]$ is $0$ and using the support property of $\omega$, the above expression is equal to
\begin{align*}
\frac{1}{\pi} \int_0^\epsilon \int_0^\epsilon \left(-\cot[\mu(x+y)]+f_x(x,y)-f_x(x,-y)\right) \omega(x,t)\omega(y,t)\, dy\, dx
\end{align*}
Since $f$ is smooth and $\omega$ is positive, we can make $\epsilon_2$ small enough so that the kernel in the parentheses above in the integrand is negative.
\end{proof}

\medskip
\noindent
Now, so we can take advantage of our lemmas, we choose $\epsilon=\min\{\epsilon_1,\epsilon_2\}$ for the support of our initial data.

\bigskip
\noindent
\begin{proof}[Proof of Theorem \ref{genper}] Throughout, $C(f)$ will be a positive constant that only depends on $f$ and not $\omega_0$. We will show that
\begin{align}
I(t) := \int_0^{L/2} \theta(x,t)\cot(\mu x)\, dx
\end{align}
must blow up. Taking time derivative of $I$ and using Lemma \ref{HLlemma}, we get
\begin{align*}
\frac{d}{dt} I(t) &= -\int_0^{L/2} u(x) \theta_x(x)\cot (\mu x)\, dx \\
&= \frac{1}{\pi} \int_0^{L/2} \theta_x(x) \int_0^{L/2} \omega(y) \cot(\mu y) K(x,y)\, dy\, dx \\
&\quad + \int_0^{L/2} \theta_x(x) \left(u_f(x)\cot(\mu x)\right)\, dx \ge J(t) + \int_0^{L/2} \theta_x(x) \left(u_f(x) \cot(\mu x)\right)\, dx
\end{align*}
where, using the same notation as before,
$$
J(t) = \frac{2}{\pi} \int_0^{L/2} \theta(x)\omega(x)\cot(\mu x)\, dx
$$
Now, we would like to bound the extra term arising because of $f$. Since $f$ is smooth and $\omega$ is supported near the origin,
\begin{align*}
|u_f(x)\cot(\mu x)| =\left| \int_0^\epsilon \left[ \cot(\mu x)(f(x,y)-f(x,-y))\right] \omega(y)\,dy\right| \le C(f)  \cdot \left(\int_0^{L/2} \omega(y)dy\right).
\end{align*}

\noindent
Therefore, we have
\begin{align}
\label{eq:I}
\frac{d}{dt} I(t) \ge J(t)- C(f)M\left(\int_0^{L/2}\omega(y)dy \right)  \ge J(t) - C(f) M^2t
\end{align}

\noindent
Now, we derive a differential inequality for $J(t)$.
\begin{align*}
\frac{d}{dt} J(t) &= \frac{2}{\pi} \int_0^{L/2} -(\theta(x)\omega(x))_x u(x)\cot(\mu x) + \theta_x(x)\theta(x)\cot(\mu x)\, dx \\ &= \frac{2}{\pi}\int_0^{L/2} \theta(x)\omega(x) (u(x)\cot(\mu x))_x\, dx + \frac{\mu}{\pi} \int_0^{L/2} \theta^2(x)\csc^2(\mu x)\, dx
\end{align*}
As before, by Cauchy-Schwarz inequality the second integral is bounded below by $\ds \frac{2}{L^2} I(t)^2$. We split the first integral into two parts:
\begin{align*}
\frac{2}{\pi}\int_0^{L/2} \theta(x)\omega(x) (u_{HL}(x)\cot(\mu x))_x\, dx+ \frac{2}{\pi}\int_0^{L/2} \theta(x)\omega(x) (u_f(x)\cot(\mu x))_x\, dx.
\end{align*}
By the arguments in the proof of theorem \ref{periodic}, the first integral is positive. The second integral is equal to
\begin{align}
\label{eq:J1}
\frac{2}{\pi} \int_0^{L/2} \theta_y(y) \left[\int_y^{L/2} \omega(x)(u_f(x)\cot(\mu x))_x\, dx\right]\, dy
\end{align}
Using the smoothness, boundedness, and symmetries of $f$, we have
\begin{align}\label{c3}
|\partial_x(u_f(x)\cot(\mu x))| = \left|\int_0^\epsilon \partial_x \left[ \cot(\mu x)(f(x,y)-f(x,-y))\right] \omega(y)\,dy\right|
\end{align}
Now let $h(x,y)=\cot(\mu x)(f(x,y)-f(x,-y))$. Then it is easy to see that $h\in C^1$ when $f\in C^2$, which means that $|\partial_x h(x,y)|$ is bounded above. This implies that
 the right hand side of (\ref{c3}) can be bounded above by
$$
 C(f)  \cdot \left(\int_0^{L/2} \omega(y)dy\right).
$$
Inserting this estimate into \eqref{eq:J1}, and using monotonicity of $\theta,$ we get that \eqref{eq:J1} is bounded below by
\begin{align*}
- C(f)  M \left(\int_0^{L/2} \omega(y)dy\right)^2.
\end{align*}
Putting things together, we get
\begin{align}
\label{eq:J}
\frac{d}{dt} J(t) \ge \frac{2}{L^2} I(t)^2 - C(f)  M \left(\int_0^{L/2} \omega(y)dy\right)^2 \ge \frac{2}{L^2}I(t)^2 - C(f) M^3 t^2
\end{align}
Now, we will show that the differential inequalities we have established will lead to finite time blow up. By $\eqref{eq:I}$ and $\eqref{eq:J}$ we obtain
\begin{equation}\label{Ibound}
\begin{split}
\frac{d}{dt} I(t) & \ge \frac{2}{L^2} \int_0^t I^2(s)\, ds + J(0) - c(f) M^2 t- C(f) M^3 \frac{t^3}{3}\\
& \ge  \frac{2}{L^2} \int_0^t I^2(s)\, ds- c(f)  M^2 t- C(f) M^3 \frac{t^3}{3}.
\end{split}
\end{equation}
%
%
%
%
We claim that one can choose $I(0)$ large enough so that the effect of the negative terms is controlled. By a rather crude estimate we have
\begin{align*}
\frac{d}{dt} I(t)& \ge - c(f)  M^2 t- C(f) M^3 \frac{t^3}{3}.\\
\end{align*}
After integration, this implies
\begin{align}\label{initialI}
I(t) \ge I(0)-C(f) M^2\left( \frac{t^2}{2}+M\frac{t^4}{12}\right).
\end{align}
Now fix a time, say $1$. We will show that $I(0)$ can be chosen large enough so that $I(t)$ blows up before time $1$. Note that assuming $I(0)\geq C(f) M^2\left( \frac{1}{2}+M\frac{1}{12}\right)$, we have for $t \leq 1$,
\begin{align*}
\frac{1}{L^2} \int_0^{t} I^2(s)\, ds \ge\frac{t}{L^2} \left[I(0)-C(f) M^2\left( \frac{1}{2} +\frac{M}{12}\right)\right]^2
\end{align*}
Choose $I(0)$ so that
\begin{align}\label{estaux123}
I(0) \ge C(f)M^2\left(\frac{1}{2}+\frac{M}{12}\right) + L\sqrt{c(f)M^2+C(f)\frac{M^3}{3}}
\end{align}
Then, for $0 \leq t \leq 1$, with this choice of $I(0)$ and using \eqref{Ibound} and \eqref{estaux123}, we get
\begin{align*}
\frac{d}{dt} I(t) &\ge \frac{1}{L^2} \int_0^t I(s)^2\, ds+t\left(c(f)M^2+C(f)\frac{M^3}{3}\right)- c(f)  M^2 t- C(f) M^3 \frac{t^3}{3}\\
&\ge  \frac{1}{L^2} \int_0^t I(s)^2\, ds
\end{align*}
By perhaps making $I(0)$ a little larger, if needed, we can show $I(t)$ becomes infinite before time $1$ by Lemma \ref{ode}.
\end{proof}

\section{Appendix: Real Line Case}
One can also consider the model equation (\ref{model1}) and (\ref{model2}) with the law $\eqref{kern}$ for compactly supported data on $\mathbb{R}$.
we only outline main ideas and changes involved, leaving all details to the interested reader.
Without loss of generality we assume the domain of the initial data is $[-1,1]$. 
In this case, similar argument like in Section 2 can show that the corresponding modified Hou-Luo kernel will be
\begin{equation}\label{Fnon}
F(x,y,a)=\frac{y}{x}\left[\log\left(\frac{(x-y)^2}{(x+y)^2}\right)+\log\left(\frac{(x+y)^2+a}{(x-y)^2+a}\right)\right],
\end{equation}
for $a>0$.

The analogue of Lemma \ref{keylemma} will be the following:

\begin{lemma}

{\bf (a)} For any $a\neq 0$, there is a constant $C(a)> 0$ such that for any $0<x<y<1$, $F(x,y,a)\leq -C(a)$.

\medskip
{\bf (b)}For any $0<y<x<\infty$, $F(x,y,a)$ is increasing in $x$.

\medskip
{\bf (c)} For any $0<x,y<\infty$, $\frac{1}{y}(\partial_xF)(x,y,a)+\frac{1}{x}(\partial_x F)(y,x,a)$ is positive.

\end{lemma}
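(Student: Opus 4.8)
The statement is the real-line analogue of Lemma~\ref{keylemma}, and my plan is to follow the same three-step structure but to capitalize on the fact that on the line the factors $\sin,\tan$ degenerate to their linear counterparts, so that $F$ becomes a genuinely rational expression in $x,y$. This makes every estimate cleaner than on the circle. One orienting remark is the scaling identity $F(\lambda x,\lambda y,\lambda^2 a)=F(x,y,a)$, which shows that the sign assertions in (b) and (c), whose domains are scale invariant, really only depend on $a$ through rescaling; I will not strictly need this, but it explains why no genuine case analysis arises.

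For part (a) I would first rewrite $F$ in the multiplicative form
\[
F(x,y,a)=\frac{y}{x}\log\!\left(\frac{(x-y)^2\,[(x+y)^2+a]}{(x+y)^2\,[(x-y)^2+a]}\right)=\frac{y}{x}\log(1-w),\qquad w=\frac{4xy\,a}{(x+y)^2\,[(x-y)^2+a]},
\]
noting $0<w<1$ since $(x-y)^2<(x+y)^2$. Using $\log(1-w)\le -w$ gives $F\le -\tfrac{y}{x}w=-\tfrac{4y^2a}{(x+y)^2[(x-y)^2+a]}$, and since $0<x<y<1$ we have $(x+y)^2<4y^2$ and $(x-y)^2+a<1+a$, whence $F\le-\tfrac{a}{1+a}$. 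Thus $C(a)=a/(1+a)$ works and the four-case argument from the periodic setting is avoided entirely.

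For part (b) I would differentiate and organize $\partial_xF=I+II$ exactly as in the proof of Lemma~\ref{keylemma}(b): with $\ell$ denoting the bracketed logarithm in $F$ (which is negative by the computation above), $I=-\tfrac{y}{x^2}\ell>0$, while $II=\tfrac{y}{x}\,\partial_x\ell$. A short calculation collapses the four derivative terms into $\partial_x\ell=2a\,[\,g(x-y)-g(x+y)\,]$ with $g(t)=\big(t(t^2+a)\big)^{-1}$. Since $g$ is decreasing on $(0,\infty)$ and $0<x-y<x+y$ when $0<y<x$, one gets $II>0$, hence $\partial_xF>0$.

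Part (c) is where the real work lies. Writing $G(x,y,a)=\tfrac1y(\partial_xF)(x,y,a)+\tfrac1x(\partial_xF)(y,x,a)$ and using that $g$ is odd, the cross terms telescope and the $(x-y)$ factors cancel the would-be singularity, leaving the closed form
\[
G=-\ell\Big(\tfrac1{x^2}+\tfrac1{y^2}\Big)-\frac{2a}{xy}\Big(\tfrac1{(x-y)^2+a}+\tfrac1{(x+y)^2+a}\Big).
\]
As in the periodic case, $G(x,y,0)=0$, so it suffices to show $\partial_aG\ge0$. I would compute $\partial_aG$, clear the positive denominator, and reduce positivity to the inequality $(P+Q)(P+a)(Q+a)\ge P(Q+a)^2+Q(P+a)^2$ with $P=(x-y)^2,\ Q=(x+y)^2$, using $Q-P=4xy$ and $P+Q=2(x^2+y^2)$; expanding, the cubic and $a^2$ terms cancel and the difference equals $a\,(P-Q)^2\ge0$ — the exact analogue of the vanishing $A_2=A_0=0$ and nonnegative $A_1$ found on the circle. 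Integrating in $a$ from $0$ then yields $G\ge0$. The only genuinely delicate step is the bookkeeping here: verifying the telescoping that produces the closed form, and then the polynomial identity $(P+Q)(P+a)(Q+a)-P(Q+a)^2-Q(P+a)^2=a(P-Q)^2$. Both are finite, routine computations, so I expect no conceptual obstruction, with the payoff that (a) needs no case analysis and (c) collapses to a single clean identity.
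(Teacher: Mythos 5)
Your argument is correct throughout. For parts (b) and (c) it coincides with the paper's proof essentially step for step: the same decomposition $\partial_x F=I+II$ with the decreasing function $g(t)=\bigl(t(t^2+a)\bigr)^{-1}$ for (b), and for (c) the same closed form for $G$, the observation $G(x,y,0)=0$, and the reduction of $\partial_aG\ge0$ to a polynomial identity in $a$; your identity $(P+Q)(P+a)(Q+a)-P(Q+a)^2-Q(P+a)^2=a(P-Q)^2$ with $P=(x-y)^2$, $Q=(x+y)^2$ is precisely the paper's computation $A_2=A_0=0$, $A_1=32xy\ge 0$ after clearing the prefactor $2/(xy)$ (indeed $a(P-Q)^2=16x^2y^2a$). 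The genuine difference is in part (a). The paper disposes of (a) by pointing back to the four-case analysis of Lemma~\ref{keylemma}(a) (``similar but easier''), which in places produces only a non-explicit constant via compactness; you instead write $F=\tfrac{y}{x}\log(1-w)$ with $w=\tfrac{4xya}{(x+y)^2[(x-y)^2+a]}\in(0,1)$ and combine $\log(1-w)\le-w$ with $y/x>1$, $(x+y)^2<4y^2$ and $(x-y)^2+a<1+a$ to obtain the explicit bound $F\le -a/(1+a)$. This is shorter, self-contained, and quantitative. It is worth noting that the same device transfers to the periodic setting: there $\sin^2\mu(x+y)-\sin^2\mu(x-y)=\sin(2\mu x)\sin(2\mu y)$ and $0<\sin\mu(x+y)<2\cos(\mu x)\sin(\mu y)$ for $0<x<y<L/2$, which yields $F(x,y,a)\le-a/(1+a)$ there as well and would let one bypass the case analysis in Lemma~\ref{keylemma}(a).
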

\begin{proof}
First it is easy to see that $F(x,y,a)$ is non-positive. For part (a), one can follow the similar but easier argument as in the proof of part (a) of Lemma \ref{keylemma}. Now let us prove part (b) and (c).

\smallskip
\noindent
{\bf Proof of (b)}

\medskip
By direct computation
\begin{align*}
\frac{1}{y}\partial_xF(x,y,a)&=-\frac{1}{x^2}\left[\log\left(\frac{(x-y)^2}{(x+y)^2}\right)+\log\left(\frac{(x+y)^2+a}{(x-y)^2+a}\right)\right]\\
&+\frac{1}{x}\left[\frac{2(x-y)}{(x-y)^2}-\frac{2(x-y)}{(x-y)^2+a}-\frac{2(x+y)}{(x+y)^2}+\frac{2(x+y)}{(x+y)^2+a}\right]\\
&=-\frac{1}{x^2}\left[\log\left(\frac{(x-y)^2}{(x+y)^2}\right)+\log\left(\frac{(x+y)^2+a}{(x-y)^2+a}\right)\right]\\
&+\frac{1}{x}\left[\frac{2a(x-y)}{(x-y)^2((x-y)^2+a)}-\frac{2a(x+y)}{(x+y)^2((x+y)^2+a)}\right]\\
&=I+II.\\
\end{align*}
The term $I$, by the same argument as in the proof of the periodic analog, is positive. For the term $II$, we have
$$
II=\frac{1}{x}(g(x-y)-g(x+y)),
$$
 where $g(t)=\frac{2a}{t(t^2+a)}$. It is easy to see that for $t>0$, $g(t)$ is decreasing in $t$, which means $II\geq 0$ whenever $0<y<x$.

\smallskip
\noindent
{\bf Proof of (c)}

\medskip
First of all, let us call our target function $G(x,y,a)$, which means
\begin{align*}
G(x,y,a)&=\frac{1}{y}(\partial_x F)(x,y,a)+\frac{1}{x}(\partial_x F)(y,x,a)\\
&=-\left(\frac{1}{x^2}+\frac{1}{y^2}\right)\left[\log\left(\frac{(x-y)^2}{(x+y)^2}\right)+\log\left(\frac{(x+y)^2+a}{(x-y)^2+a}\right)\right]\\
&+\left(\frac{1}{x}-\frac{1}{y}\right)\left(\frac{2a(x-y)}{(x-y)^2((x-y)^2+a)}\right)-\left(\frac{1}{y}+\frac{1}{x}\right)\left(\frac{2a(x+y)}{(x+y)^2((x+y)^2+a)}\right)\\
&=-\left(\frac{1}{x^2}+\frac{1}{y^2}\right)\left[\log\left(\frac{(x-y)^2}{(x+y)^2}\right)+\log\left(\frac{(x+y)^2+a}{(x-y)^2+a}\right)\right]\\
&-\frac{2a}{xy((x-y)^2+a)}-\frac{2a}{xy((x+y)^2+a)}.
\end{align*}
 Now our aim is to prove the positivity of $G(x,y,a)$. Notice that when $a=0$, $G(x,y,a)=0$, as a consequence, to prove the positivity of $G(x,y,a)$, the only thing we need to show is this function is increasing in $a$ for any $x,y$ in the domain. On the other hand,
\begin{align*}
\partial_a G(x,y,a)&=-\left(\frac{1}{x^2}+\frac{1}{y^2}\right)\left(\frac{1}{(x+y)^2+a}-\frac{1}{(x-y)^2+a}\right)\\
&-\frac{2}{xy}\left[\frac{(x-y)^2}{((x-y)^2+a)^2}+\frac{(x+y)^2}{((x+y)^2+a)^2}\right].
\end{align*}
As a conclusion,
\begin{align*}
&((x-y)^2+a)^2((x+y)^2+a)^2\partial_a G(x,y,a)\\
&=\left(\frac{1}{x^2}+\frac{1}{y^2}\right)((x+y)^2-(x-y)^2)((x+y)^2+a)((x-y)^2+a)\\
&-\frac{2}{xy}\left[(x-y)^2((x+y)^2+a)^2+(x+y)^2((x-y)^2+a)^2\right]\\
\end{align*}
It is easy to see this is a quadratic polynomial in $a$. Let's call the coefficient of the second order term $A_2$ , then
\begin{align*}
A_2&=\left(\frac{1}{x^2}+\frac{1}{y^2}\right)((x+y)^2-(x-y)^2)-\frac{2}{xy}[(x-y)^2+(x+y)^2]\\
&=\left(\frac{1}{x^2}+\frac{1}{y^2}\right)\cdot 4xy-\frac{2}{xy}[2x^2+2y^2]\\
&=\frac{4}{x^2y^2}((x^2+y^2)xy-xy(x^2+y^2))\\
&=0.
\end{align*}
Similarly, for coefficient of the first order term $A_1$, we have
\begin{align*}
A_1&=\left(\frac{1}{x^2}+\frac{1}{y^2}\right)(4xy)((x+y)^2+(x-y)^2)-\frac{2}{xy}[2(x-y)^2(x+y)^2+2(x+y)^2(x-y)^2]\\
&=\frac{1}{x^2y^2}[(x^2+y^2)^2\cdot 8xy-8xy(x^2-y^2)^2]\\
&\geq 0.
\end{align*}
Lastly, for the coefficient of the constant term $A_0$, we have
\begin{align*}
A_0&=\left(\frac{1}{x^2}+\frac{1}{y^2}\right)(4xy)(x+y)^2(x-y)^2-\frac{2}{xy}[(x-y)^2(x+y)^4+(x+y)^2(x-y)^4]\\
&=\frac{(x+y)^2(x-y)^2}{x^2y^2}[(x^2+y^2)\cdot 4xy-2xy((x+y)^2+(x-y)^2)]\\
&=0.\\
\end{align*}
In all, we have $\partial_a G(x,y,a)\geq 0$ for $x,y>0$.
\end{proof}

From this lemma, one can do the same argument to get the blow up result, which is the following theorem:
 \begin{theorem}
\label{periodic1}
There exists initial data such that solutions to $\eqref{model1}$ and $\eqref{model2}$, with velocity given by $\eqref{eq:general}$, and $F(x,y,a)$ defined by (\ref{Fnon}), blow up in finite time.
\end{theorem}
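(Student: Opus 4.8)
The plan is to transcribe the proof of Theorem~\ref{periodic} to the real line, replacing $\cot(\mu x)$ by $1/x$ and the half-period $[0,L/2]$ by $[0,1]$, and invoking the real-line key lemma just established in place of Lemma~\ref{keylemma}. Concretely, I take initial data $\theta_{0x},\omega_0$ smooth, odd, compactly supported in $[-1,1]$, nonnegative on $[0,1]$, with $\theta_0(0)=0$ and $\|\theta_0\|_\infty\le M$, and I monitor the functional
$$I(t):=\int_0^1 \theta(x,t)\,\frac{dx}{x}.$$
Since $\theta(0)=0$ gives $\theta(x)\le x\,\|\theta_x\|_{L^\infty}$, the integrand is bounded and $|I(t)|\le \|\theta_x(\cdot,t)\|_{L^\infty}$; hence if $I$ becomes infinite in finite time, the Beale-Kato-Majda type criterion forces a singularity. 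The first point to record is that, because the real-line $F$ is non-positive (part (a)) and $\omega/y\ge 0$ on $[0,1]$, the Biot-Savart representation $u(x)/x=\tfrac1\pi\int_0^1 F(x,y,a)\,\omega(y)/y\,dy$ gives $u\le 0$ on $(0,1)$, so the supports of $\omega$ and $\theta_x$ remain inside $[0,1]$ for as long as the solution stays regular, keeping $I(t)$ finite and well defined.

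Next I differentiate. Using $\theta_t=-u\theta_x$ and part (a) of the lemma ($F\le -C<0$ for $x<y$, and $F\le 0$ elsewhere), together with Fubini and $\theta(0)=0$, the same manipulation as in Theorem~\ref{periodic} yields
$$\frac{d}{dt}I(t)\ \ge\ C\,J(t),\qquad J(t):=\frac{2}{\pi}\int_0^1 \theta(x,t)\,\omega(x,t)\,\frac{dx}{x},$$
with $J(0)\ge 0$ by the sign assumptions. Differentiating $J$ and using $(\theta\omega)_t=-u(\theta\omega)_x+\theta\theta_x$, an integration by parts (the boundary contributions being nonnegative or vanishing, since $\theta\sim\theta_x(0)x$ near $x=0$ and $\omega$ is supported away from $x=1$) produces a transport term plus a term $\tfrac1\pi\int_0^1\theta^2/x^2\,dx$. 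Cauchy-Schwarz in the form $I^2\le\int_0^1\theta^2/x^2\,dx$ bounds this second term below by $C\,I^2$, exactly as in the periodic case.

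The transport term $\int_0^1\theta\omega\,(u/x)_x\,dx$ is where the remaining parts of the lemma enter. Writing $\theta(x)=\int_0^x\theta_y\,dy$ and exchanging the order of integration reduces its positivity to that of $\int_y^1\omega(x)\,(u(x)/x)_x\,dx$ for each fixed $y$; splitting $\omega=\omega\chi_{[0,y]}+\omega\chi_{[y,1]}$, the cross piece is controlled by the monotonicity statement (part (b), $\partial_x F\ge 0$ for $y<x$), and the self-interaction of the right piece is handled, after symmetrization, by the positivity of $G(x,y,a)=\tfrac1y(\partial_xF)(x,y,a)+\tfrac1x(\partial_xF)(y,x,a)$ (part (c)). Combining these gives $\tfrac{d}{dt}J\ge C\,I^2$, and together with $\tfrac{d}{dt}I\ge C\,J$ and $J(0)\ge 0$ this yields $\tfrac{d}{dt}I(t)\ge C\int_0^t I^2(s)\,ds$; Lemma~\ref{ode} then forces $I$ to blow up in finite time. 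I expect no genuinely new analytic difficulty: the one delicate input is the algebraic positivity of $F$ and $G$, which is already proved in the preceding lemma, so the main obstacle, as in Theorem~\ref{periodic}, is simply confirming that the boundary terms in the integrations by parts are controlled and that the support of the data, hence the effective domain of $I$, stays within $[0,1]$ under the flow.
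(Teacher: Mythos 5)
Your proposal is correct and is exactly the argument the paper intends: the paper states the real-line key lemma and then leaves Theorem~\ref{periodic1} as an exercise, saying only that ``one can do the same argument,'' and your transcription of the periodic proof (with $\cot(\mu x)\mapsto 1/x$, the same functionals $I$ and $J$, the same $\omega_\ell/\omega_r$ splitting, and Lemma~\ref{ode}) is precisely that argument. The points you flag as needing care --- the sign of $u$ preserving the support in $[0,1]$ and the boundary terms in the integrations by parts --- are handled correctly.
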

In fact, we can prove the following type of initial data will lead to blow up:
\begin{itemize}
\item $\theta_{0x}, \omega_0$ smooth odd and are supported in $[-1,1]$.
\item $\theta_{0x}, \omega_0 \ge 0$ on $[0,1]$.
\item $\theta_0(0)=0$.
\item $\|\theta_0\|_\infty \le M$.
\end{itemize}

And similarly, for general pertubation (analogue of theorem \ref{genper}), we also have the similar blow up result.

Assume the velocity $u$ is given by the following choice of Biot-Savart Law
\begin{align}\label{gennon1}
u(x) &= \frac{1}{\pi} \int_{-1}^1 \left( \log|(x-y)]| + f(x,y)\right) \omega(y)\, dy,\\
\end{align}
where $f$ is a smooth function whose precise properties we will specify later. We view $f$ as a perturbation and we will  show solutions to the system (\ref{model1}) and (\ref{model2})
can still blow up in finite time.

\begin{theorem}\label{gennon}
Let $f\in C^2$ be supported on $[-1,1]$, such that $f(x,y)=f(-x,-y)$ for all $y$. Then there exists initial data $\omega_0$, $\theta_0$  such that solutions of $\eqref{model1}$ and $\eqref{model2}$, with velocity given by (\ref{gennon1}), blow up in finite time.
\end{theorem}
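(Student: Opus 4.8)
The plan is to transcribe the proof of Theorem~\ref{genper} to the real line, replacing the periodic weight $\cot(\mu x)$ by $1/x$, the main kernel $\log|\sin\mu(x-y)|$ by $\log|x-y|$, and the half-period $[0,L/2]$ by $[0,1]$. The role played there by the main-term kernel $K$ of Lemma~\ref{HLlemma} is now taken by its real-line analogue $K(x,y)=\tfrac{y}{x}\log\bigl|\tfrac{x+y}{x-y}\bigr|$, whose nonnegativity, monotonicity, and symmetrized positivity are exactly the properties recorded in \cite{CCF,sixAuthors} and are consistent with dropping the lower-order term in \eqref{Fnon}. First I would fix initial data as for Theorem~\ref{periodic1} (odd, supported in $[-1,1]$, with $\omega_0,\theta_{0x}\ge 0$ on $[0,1]$, $\theta_0(0)=0$, $\|\theta_0\|_\infty\le M$), but in addition localized so that $(\supp\theta_{0x}\cup\supp\omega_0)\cap[0,1]\subset[0,\epsilon]$ for a small $\epsilon$ chosen below.

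Before the main argument I would establish the two structural facts that make the localization work, namely the real-line counterparts of Lemma~\ref{44} and of the $L^1$-control lemma. Using oddness, one writes $u(x)=\tfrac1\pi\int_0^\epsilon\bigl(\log\bigl|\tfrac{x-y}{x+y}\bigr|+f(x,y)-f(x,-y)\bigr)\omega(y)\,dy$; the logarithmic term is negative and bounded above by $-c<0$ for $0<x,y\le\epsilon$, while $|f(x,y)-f(x,-y)|\le 2\epsilon\|f\|_{C^1}$ by the mean value theorem, so for $\epsilon$ small one gets $u<0$ on $[0,\epsilon]$ and the supports of $\omega$ and $\theta_x$ stay inside $[0,\epsilon]$ for all times of regular existence. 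For the $L^1$ bound, integrating \eqref{model1} over $[0,1]$ gives $\tfrac{d}{dt}\int_0^1\omega\le M+\int_0^1 u_x\omega$; after symmetrizing, the principal-value kernel $\tfrac{1}{x-y}$ cancels by antisymmetry and leaves $-\tfrac{1}{x+y}$ together with the bounded contribution $f_x(x,y)-f_x(x,-y)$, and since $\tfrac{1}{x+y}\ge\tfrac{1}{2\epsilon}$ dominates for $\epsilon$ small, the remaining integral is negative, yielding $\int_0^1\omega(y,t)\,dy\le Mt$. I would then set $\epsilon=\min\{\epsilon_1,\epsilon_2\}$.

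The heart of the argument monitors $I(t)=\int_0^1\theta(x,t)\,x^{-1}\,dx$, which is finite because $\theta(0)=0$ and $\theta$ is Lipschitz, with $|I(t)|\le C\|\theta_x\|_\infty$; hence finite-time blow up of $I$ contradicts global regularity through \eqref{BKM1}. Differentiating and splitting $u=u_{HL}+u_f$ as in \eqref{gennon1},
\[\frac{d}{dt}I = \frac1\pi\int_0^1\theta_x(x)\int_0^1\omega(y)\,y^{-1}K(x,y)\,dy\,dx + \int_0^1\theta_x(x)\,u_f(x)\,x^{-1}\,dx \ge J(t)+\int_0^1\theta_x\,u_f\,x^{-1}\,dx,\]
where $J(t)=\tfrac2\pi\int_0^1\theta\omega\,x^{-1}\,dx$ and the inequality uses the real-line analogue of Lemma~\ref{HLlemma}. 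The perturbation term is controlled because $x^{-1}(f(x,y)-f(x,-y))$ extends to a bounded function: the hypothesis $f(x,y)=f(-x,-y)$ forces $f(0,y)-f(0,-y)=0$, killing the $x^{-1}$ singularity, and $f\in C^2$ supplies the bound, so $|u_f(x)x^{-1}|\le C(f)\int_0^1\omega$ and therefore $\tfrac{d}{dt}I\ge J(t)-C(f)M^2 t$. For $J$ one computes $\tfrac{d}{dt}J=\tfrac2\pi\int_0^1\theta\omega\,(u x^{-1})_x\,dx+\tfrac1\pi\int_0^1\theta^2 x^{-2}\,dx$, where the last term is $\ge c\,I^2$ by Cauchy--Schwarz. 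Splitting $(u x^{-1})_x=(u_{HL}x^{-1})_x+(u_f x^{-1})_x$, the $u_{HL}$ contribution is positive by the same monotonicity and symmetrized-positivity arguments used for Theorem~\ref{periodic1}, while the $u_f$ contribution is bounded by $C(f)M\bigl(\int_0^1\omega\bigr)^2\le C(f)M^3 t^2$ since $\partial_x[x^{-1}(f(x,y)-f(x,-y))]$ is again bounded by $C^2$ regularity and the symmetry. Thus $\tfrac{d}{dt}J\ge c\,I^2-C(f)M^3 t^2$.

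Combining the two inequalities and integrating in time, with $J(0)\ge 0$ (since $\theta_0,\omega_0\ge 0$ on $[0,1]$), produces a closed inequality of the form $\tfrac{d}{dt}I\ge c\int_0^t I^2\,ds-c(f)M^2 t-C(f)M^3 t^3/3$ on $[0,1]$; taking $I(0)$ large enough within our data class makes the integral term dominate the two polynomial errors for $t\le 1$, reducing matters to the hypothesis of Lemma~\ref{ode}, which forces blow up before time $1$. I expect the main obstacle to be precisely the uniform control of the perturbation at the boundary $x=0$: both $u_f(x)x^{-1}$ and $\partial_x(u_f(x)x^{-1})$ must stay bounded up to $x=0$, and this is exactly where the oddness symmetry $f(x,y)=f(-x,-y)$ and the $C^2$ regularity of $f$ are indispensable. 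Everything else is a routine transcription of the periodic perturbative argument with $\cot(\mu x)$ replaced by $1/x$.
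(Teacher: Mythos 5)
Your proposal is correct and follows exactly the route the paper intends: the paper states Theorem \ref{gennon} without proof, explicitly leaving it as a transcription of the periodic perturbative argument of Theorem \ref{genper} (localization lemma, $L^1$ control of $\omega$, the functionals $I$ and $J$ with weight $1/x$ in place of $\cot(\mu x)$, and the positivity facts for the real-line kernel supplied by the appendix lemma). You also correctly identify the one point that genuinely needs the hypotheses $f\in C^2$ and $f(x,y)=f(-x,-y)$, namely the boundedness of $u_f(x)/x$ and of $\partial_x(u_f(x)/x)$ near $x=0$.
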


\noindent
Again we can prove the following type of initial data will form finite time singularity:

\begin{itemize}
\item $\theta_{0x}, \omega_0$ smooth odd and are supported in $[-1,1]$.
\item $\theta_{0x}, \omega_0 \ge 0$ on $[0,1]$.
\item $\theta_0(0)=0$.
\item $\supp \omega_0 \subset [0,\epsilon]$.
\item $\|\theta_0\|_\infty \le M$.
\end{itemize}

We leave the proofs of these theorems as exercises for interested reader.

\textbf{Acknowledgment.}
TD and AK acknowledge partial support of the NSF-DMS grant 1412023. XX acknowledges partial support of the NSF-DMS grant 1535653.


\end{document}